\def\tank#1{\protected@xdef\@thanks{\@thanks
 \protect\footnotetext[0]{#1}}}
\def\bigfoot{

 \@footnotetext}
\newcommand{\ea}{\end{array}}
\numberwithin{equation}{section}
\newtheorem{theorem}{Theorem}[section]
\newtheorem{lemma}{Lemma}[section]
\newtheorem{proposition}{Proposition}[section]
\newtheorem{cor}{Corollary}[section]
\newtheorem{definition}{Definition}[section]
\def\beq{\begin{equation}}
\def\nneq{\end{equation}}
\def\bthm{\begin{theorem}}
\def\nthm{\end{theorem}}
\def\blem{\begin{lemma}}
\def\nlem{\end{lemma}}
\def\bprf{\begin{proof}}
\def\nprf{\end{proof}}
\def\bprop{\begin{prop}}
\def\nprop{\end{prop}}
\def\brmk{\begin{rem}}
\def\nrmk{\end{rem}}
\def\bexa{\begin{exa}}
\def\nexa{\end{exa}}
\def\bcor{\begin{cor}}
\def\ncor{\end{cor}}
\title[Lower classes and Chung's LIL of the  FIGFBM]{Lower classes and Chung's LILs of the fractional integrated generalized fractional Brownian motion}
\author[M. Lyu]{Mengjie Lyu}
\address[]{Mengjie Lyu, School of Mathematics and Statistics,  Wuhan University,  Wuhan, 430072,
China.}
\email{mengjie.lyu@whu.edu.cn}
\author[M. Wang]{Min Wang}
\address[]{Min Wang, School of Mathematics and Statistics,  Wuhan University,  Wuhan, 430072,
China.}
\email{minwang@whu.edu.cn}
\author[R. Wang]{Ran Wang}
\address[]{Ran Wang, School of Mathematics and Statistics,  Wuhan University,  Wuhan, 430072,
China.}
\email{rwang@whu.edu.cn}
\date{}
\begin{document}
\maketitle

 \noindent{\bf Abstract:}{
Let $\{X(t)\}_{t\geqslant0}$ be the generalized fractional Brownian motion  introduced by Pang and Taqqu  (2019):
 \begin{align*}
 \{X(t)\}_{t\ge0}\overset{d}{=}&\left\{  \int_{\mathbb R}  \left((t-u)_+^{\alpha}-(-u)_+^{\alpha} \right) |u|^{-\gamma/2}
  B(du)  \right\}_{t\ge0},
 \end{align*}
where $ \gamma\in [0,1),  \ \  \alpha\in \left(-\frac12+\frac{\gamma}{2}, \  \frac12+\frac{\gamma}{2} \right)$ are constants.
For any $\theta>0$, let
\begin{align*}
 Y(t)=\frac{1}{\Gamma(\theta)}\int_0^t (t-u)^{\theta-1} X(u)du, \quad t\ge 0.
\end{align*}
Building upon the arguments of Talagrand (1996),    we give integral criteria for   the lower classes of $Y$  at $t=0$ and at infinity, respectively. As a consequence, we derive its Chung-type laws of the iterated logarithm.  In the proofs,   the small ball probability estimates  play important roles. }
 \vskip0.3cm

 \noindent{\bf Keywords:} {Fractional Brownian motion;  Small ball probability; Lower classes; Chung's LIL.}
 \vskip0.3cm

 \noindent {\bf MSC: } {60G15, 60G17, 60G18, 60G22.}
 \vskip0.3cm

 \section{Introduction and main results}


The generalized fractional Brownian motion (GFBM, for  short) $X:=\{X(t)\}_{t\ge0}$ is a centered Gaussian  self-similar
process, which was introduced by Pang and Taqqu \cite{PT2019} as the scaling limit of power-law shot noise processes. It has
a stochastic integral representation:
 \begin{align}\label{eq X}
 \{X(t)\}_{t\ge0}\overset{d}{=}&\left\{  \int_{\mathbb R}  \left((t-u)_+^{\alpha}-(-u)_+^{\alpha} \right) |u|^{-\gamma/2}
  B(du)  \right\}_{t\ge0},
 \end{align}
 where the parameters $\gamma$ and $\alpha$ satisfy
 \begin{align}\label{eq constant}
 \gamma\in [0,1),  \ \  \alpha\in \left(-\frac12+\frac{\gamma}{2}, \  \frac12+\frac{\gamma}{2} \right),
 \end{align}
and $B(du)$ is a Gaussian random measure in $\mathbb R$ with the Lebesgue  control measure $du$.  It follows that the Gaussian process $X$ is self-similar with index $H$ given by
 \begin{align}
 H=\alpha-\frac{\gamma}{2}+\frac12\in(0,1).
 \end{align}

If $\gamma=0$, then $X$ becomes an ordinary fractional Brownian motion  (FBM, for short) $B^H$, which can be represented as:
\begin{align}\label{eq FBM}
\left\{B^H(t)\right\}_{t\ge 0}\overset{d}{=}\left\{ \int_{\mathbb R}
\left((t-u)_+^{H-\frac12}-(-u)_+^{H-\frac12} \right) B(du)  \right\}_{t\ge0}.
\end{align}
 If $\gamma\in (0,1)$, as shown by Pang and Taqqu \cite{PT2019},  GFBM   preserves the self-similarity property while
the factor $|u|^{-\gamma}$ introduces non-stationarity of  the increments, which is useful for reflecting the
non-stationarity of random noises  in certain physical systems.   For example, Ichiba, Pang and Taqqu \cite{IPT2020b}
studied the semimartingale properties of GFBM, and applied them to model the volatility  processes in finance.

 For the GFBM, Ichiba, Pang and Taqqu   \cite{IPT22} studied its   H\"older continuity, the functional and local laws of the iterated logarithm  and other  sample path properties;  Wang and Xiao \cite{WX2022a}
studied its precise local sample path properties, including the exact uniform modulus of continuity, small ball probabilities, Chung's law of the iterated logarithm (Chung's LIL, for short) at any fixed point $t>0$.    Chung's LIL at the origin   for GFBM   is quite subtle.  Wang and Xiao \cite{WX2022b} studied this problem  by modifying the arguments of Talagrand  \cite{Tal96}.

The results in \cite{IPT22,WX2022a, WX2022b}    show that GFBM   is an interesting example of self-similar Gaussian processes. It   has richer sample path properties than the ordinary FBM and its close relatives such as the Riemann-Liouville FBM (cf.    \cite{El11}), bifractional Brownian motion (cf.  \cite{TX07}), and the sub-fractional Brownian motion (cf.  \cite{El12}).
 In this sense, GFBM is a good object  that can be studied for the purpose of developing a general theoretical framework for studying the fine properties of self-similar Gaussian processes which, to the best of our knowledge, is still not complete yet. In particular, the asymptotic properties of a general self-similar Gaussian process at $t=0$ could be quite subtle and difficult to characterize precisely (see, e.g., \cite[Remark  1.1]{WX2022b}).

In this paper,    we continue this line of research and consider   the fractional  integrated generalized fractional Brownian motion (FIGFBM, for short):
 \begin{align}\label{y}
 Y(t)=\frac{1}{\Gamma(\theta)}\int_0^t (t-u)^{\theta-1} X(u)du, \quad t\geqslant0,\, \theta>0,
\end{align}
where  $\Gamma(\theta)=\int_{0}^{\infty } x^{\theta -1}e^{-x}dx$ is the Gamma function.      If $X$ is a FBM, $Y(t)$ is called a fractional integrated fractional Brownian motion; See,  e.g.,  \cite{El03, LL99}. If further $\theta\in\mathbb N$ and $X$ is a Brownian motion,  then  $Y=\{Y(t)\}_{t\ge0}$ is called the ($\theta$-1)-fold  integrated Brownian motion; See, e.g.,   \cite{CL03, ZL2006}.


In this paper,  we   characterize the  lower classes of $Y$ by some integral tests. As a consequence, we derive the  Chung's LILs of $Y$ at $t = 0$ and at infinity, respectively.  This  approach  was initiated in the pioneering work by  Chung \cite{Chung} for Brownian motion,    further developed by Talagrand \cite{Tal96} for FBM,    and later  by El-Nouty  \cite{El03}-\cite{El12} for some close relatives of FBM.     Building upon this  argument,  Wang and Xiao  \cite{WX2022b}  derived the Chung's LILs  of GFBM $X$ at $t = 0$ and at infinity, respectively.

In order to give precise statement of our results, we first recall the  definitions of the lower classes for the processes,  which  were introduced in
\cite{Rev90}. We refer to \cite{Rev90} for a systematic and extensive account on the studies of lower
and upper classes for Brownian motion and random walks.

 \begin{definition}\label{Def-LowerClasses}
 Let $\big\{M(t)\big\}_{t\ge0}$
 be a stochastic process and $\xi(t)$ be a function on $(0, \infty)$.
 \begin{itemize}
 \item[(a)] The function $\xi(t)$ belongs to the lower-lower class of the process $M$ at  $\infty$
 (resp. at $0$), denoted by $\xi\in LLC_{\infty}(M)$ (resp. $\xi\in LLC_{0}(M)$), if for almost all
 $\omega\in \Omega$ there exists $t_0=t_0(\omega)$ such that $M(t)\ge \xi(t)$ for every $t>t_0$
 (resp. $t<t_0$).

\item[(b)]The function $\xi(t)$ belongs to the lower-upper class of the process $M$ at  $\infty$
(resp. at $0$), denoted by $\xi\in LUC_{\infty}(M)$ (resp. $\xi\in LUC_{0}(M)$), if for  almost all $\omega\in
\Omega$ there exists a sequence $0<t_1(\omega)<t_2(\omega)<\cdots$ with $t_n(\omega)\uparrow\infty$
(resp.  $t_1(\omega) > t_2(\omega)>\cdots$ with $t_n(\omega)\downarrow0$), as $n\rightarrow \infty$,
such that $M(t_n(\omega)) \le \xi(t_n(\omega)), n\in \mathbb N$.
\end{itemize}
\end{definition}

In the following,  we always assume that
\begin{equation}\label{Def M}
    M(t):= \sup _{0\leq s\leq t}|Y(s)|,
\end{equation}
and consider its  small ball function
\begin{equation}\label{def varphi}
\varphi (\varepsilon) := \mathbb P\left(M(1) \leq\varepsilon\right) = \mathbb P\left( M(t) \leq \varepsilon t^{H+\theta}\right).
\end{equation}
 The following two theorems characterize the lower classes of FIGFBM $Y$ at $t = 0$ and $\infty$, respectively.

\begin{theorem}\label{Thm-LowerClassesZero}
Let $\gamma \in[0,1)$, $\alpha \in(-1 / 2+\gamma/2, 1 / 2)$, $\theta>0$ and $\xi:\left(0, e^e\right] \rightarrow(0, \infty)$  be a nondecreasing continuous function.
\begin{itemize}
\item[(a)] (Sufficiency). If
\begin{equation}\label{3.15}
 \frac{\xi(t)}{t^{H+\theta}}
 \text{ is bounded and }
 I_{0}(\xi):=\int_{0}^{e^e}\left(\frac{\xi(t)}{t^{H+\theta}}\right)^{-1 / \beta} \varphi \left(\frac{\xi(t)}{t^{H+\theta}}\right) \frac{d t}{t}<+\infty,
 \end{equation}
then $\xi \in L L C_{0}(M)$.

\item[(b)] (Necessity). If  $\xi \in L L C_{0}(M)$  and there exists a constant  $c_{1,1}\ge  1$  such that  $\xi(2 t) \le  c_{1,1} \xi(t)$  for all  $t \in\left(0, e^e/ 2\right]$, then \eqref{3.15} holds.
\end{itemize}
Here and below, we always assume that $\beta=\alpha+\theta+1/2$ and $H=\alpha-\gamma/2+1/2$.
\end{theorem}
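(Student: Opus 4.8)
The plan is to follow the classical Chung--Talagrand strategy, treating the two directions separately but symmetrically, with the small ball function $\varphi$ as the key quantity. For the sufficiency part (a), the first step is to discretize time geometrically: set $t_k = r^k$ for a fixed $r\in(0,1)$ and $k\to\infty$, and consider the events $A_k = \{M(t_k) \le \xi(t_k)\}$. Using the self-similarity of $X$ (index $H$) which upgrades to self-similarity of $Y$ with index $H+\theta$, one has $\mathbb P(A_k) = \varphi(\xi(t_k)/t_k^{H+\theta})$. The content of the integral test is that $\sum_k \mathbb P(A_k)$ is comparable to $I_0(\xi)$ provided the extra factor $(\xi(t)/t^{H+\theta})^{-1/\beta}$ accounts for the gaps between the discrete scales; this factor reflects that on a time interval of logarithmic length $\sim 1$, the number of ``independent'' fluctuations of $M$ at the relevant resolution scales like a power of $\xi(t)/t^{H+\theta}$, with exponent governed by $\beta = \alpha+\theta+1/2$. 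The crux is a quantitative decoupling / Slepian-type estimate: conditionally on the path up to time $t_{k+1}$, the increment of $Y$ on $[t_{k+1}, t_k]$ has a component that is essentially independent and whose sup is bounded below with probability $\ge 1 - \varphi(\cdot)^{c}$; summing the resulting bound for $\mathbb P(A_k \text{ infinitely often})$ and invoking Borel--Cantelli gives $\xi\in LLC_0(M)$.

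For the necessity part (b), I would argue by contradiction: assuming $I_0(\xi) = +\infty$ (or that $\xi(t)/t^{H+\theta}$ is unbounded — the latter handled separately and more easily), show that $M(t_n) \le \xi(t_n)$ infinitely often along some random sequence $t_n\downarrow 0$, contradicting $\xi\in LLC_0$. The mechanism is a second-moment / Borel--Cantelli argument ``from below'': one needs the events $A_k$ (along a suitably sparse subsequence of scales) to be sufficiently independent that $\sum \mathbb P(A_k) = \infty$ forces infinitely many of them to occur. This requires an upper bound on $\mathbb P(A_j \cap A_k)$ for $j < k$ in terms of $\mathbb P(A_j)\mathbb P(A_k)$ up to multiplicative constants, which in turn rests on the Gaussian correlation structure of $Y$ and the small ball estimates: the key is that over widely separated scales the process ``restarts,'' so conditioning on $M(t_j)$ being small costs only a bounded factor for $M(t_k)$ being small. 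The regularity hypothesis $\xi(2t)\le c_{1,1}\xi(t)$ is used to pass between the continuous integral $I_0(\xi)$ and its discretization without losing the divergence.

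The main obstacle, in both directions, is the decoupling of $Y$ across scales near $t = 0$. Unlike FBM, $Y$ is an iterated integral of the non-stationary-increment process $X$, so $Y(t)$ depends on the whole history $\{X(u): 0\le u\le t\}$ through the kernel $(t-u)^{\theta-1}$; there is no clean Markov or stationary-increment structure to exploit. One must therefore decompose $Y(t) = \frac{1}{\Gamma(\theta)}\int_0^{t_{k+1}} (t-u)^{\theta-1}X(u)\,du + \frac{1}{\Gamma(\theta)}\int_{t_{k+1}}^{t}(t-u)^{\theta-1}X(u)\,du$ and control the first (``past'') term — which is a smooth, slowly varying perturbation on the short interval $[t_{k+1}, t_k]$ — while showing the second term carries a genuinely fresh, well-spread Gaussian fluctuation of the right order $t_k^{H+\theta}$. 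Making the smoothness of the past term quantitative (e.g., bounding its modulus of continuity or its derivative on $[t_{k+1},t_k]$ by something negligible relative to $\xi(t_k)$, using $\xi(t)/t^{H+\theta}\to 0$) and combining it with Anderson's inequality and the two-sided small ball estimates for $Y$ from the companion results is where the real work lies; the rest is the bookkeeping of Talagrand's scheme, adapted to the exponent $\beta$ that emerges from the small ball asymptotics of the $\theta$-fold integrated process.
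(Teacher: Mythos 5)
Your overall plan (Talagrand's scheme: small-ball function, discretization, Borel--Cantelli in one direction, quasi-independence plus a second-moment Borel--Cantelli in the other, doubling to pass between integral and sum) is the same as the paper's, but two of the steps you call the ``crux'' would fail as described. For sufficiency, a fixed geometric mesh $t_k=r^k$ does not work. First, $\sum_k\mathbb P(A_k)=\sum_k\varphi\bigl(\xi(t_k)/t_k^{H+\theta}\bigr)$ is \emph{not} comparable to $I_0(\xi)$, because the factor $(\xi(t)/t^{H+\theta})^{-1/\beta}$ is unbounded in the relevant regime $\xi(t)/t^{H+\theta}\to0$. Second, ruling out the discrete events $A_k$ says nothing about intermediate times; the natural covering event $\{M(t_{k+1})\le\xi(t_k)\}$ has probability $\varphi\bigl(r^{-(H+\theta)}\xi(t_k)/t_k^{H+\theta}\bigr)$, and by Lemma \ref{cor4.1} this can exceed $\varphi\bigl(\xi(t_k)/t_k^{H+\theta}\bigr)$ by a factor of order $\exp\bigl(c\,\varepsilon_k^{-1/\beta}\bigr)$ with $\varepsilon_k=\xi(t_k)/t_k^{H+\theta}$, i.e.\ of the same exponential order as $1/\varphi(\varepsilon_k)$ itself, so summability is destroyed. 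The proof requires a mesh adapted to $\xi$, with relative spacing of order $\varepsilon^{1/\beta}$ (this is exactly where the weight $\varepsilon^{-1/\beta}$ in $I_0$ comes from), together with the chaining maximal inequality of Proposition \ref{lem-MtMu} to make the within-block comparisons affordable.

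Your decoupling mechanism is also not the right one. Splitting only the outer time integral, $Y(t)=\frac{1}{\Gamma(\theta)}\int_0^{t_{k+1}}(t-u)^{\theta-1}X(u)\,du+\frac{1}{\Gamma(\theta)}\int_{t_{k+1}}^{t}(t-u)^{\theta-1}X(u)\,du$, gives no independence at all, since $X$ on $[t_{k+1},t]$ is correlated with its past; and the programme of showing the ``past'' term is negligible relative to $\xi(t_k)$ cannot succeed: that term is of order $t^{H+\theta}\gg\xi(t)$, and even its variation over a block of relative length $\varepsilon^{1/\beta}$ is of order $t^{H+\theta}\varepsilon^{1/\beta}$, which dominates $\xi(t)=\varepsilon t^{H+\theta}$ whenever $\beta>1$ (and for your coarse mesh, always). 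The paper decouples at the level of the driving noise: after stochastic Fubini, the increment contains the component $\int_{t_k}^{t_{k+1}}(t_{k+1}-x)^{\alpha+\theta}x^{-\gamma/2}B(dx)$, independent of $\sigma(B(s):s\le t_k)$ and of standard deviation $\gtrsim(t_{k+1}-t_k)^{\beta}$ (strong local nondeterminism), and Anderson's inequality disposes of the dependent part \emph{without} any smallness requirement; this is the engine of both Lemma \ref{lem upper} and Proposition \ref{lem-MtMu}. Likewise, for necessity, ``over widely separated scales the process restarts'' is only a hope until you prove a quantitative quasi-independence; the actual estimate (Proposition \ref{Prop-MtMu1}) is not a bound by $C\,\mathbb P(A_j)\mathbb P(A_k)$ but carries both a multiplicative factor $\exp\bigl(c(t/u)^{\tau}(\nu^{-1-1/\beta}+\eta^{-1-1/\beta})\bigr)$ and an additive error $c\exp\bigl(-c(u/t)^{\tau}\bigr)$, and is proved by splitting the noise $B(dx)$ at $|x|\le\sqrt{ut}$ versus $|x|>\sqrt{ut}$ and controlling the two residual suprema by El-Nouty-type maximal inequalities for the fractional integral together with Lemma \ref{cor4.1}; without such an estimate (and the separate argument that $\xi\in LLC_0(M)$ forces $\xi(t)/t^{H+\theta}\to0$), the generalized Borel--Cantelli step has no foundation.
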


\begin{theorem}\label{Thm-LowerClassesInf}
Let  $\gamma \in[0,1)$, $\alpha \in(-1 / 2+\gamma/2, 1 / 2)$, $\theta>0$ and $\xi:\left[e^e, \infty\right) \rightarrow(0, \infty)$  be a nondecreasing continuous function. Then $ \xi \in L L C_{\infty}(M)$  if and only if
\begin{equation}\label{3.9}
 \frac{\xi(t)}{t^{H+\theta}}
 \text{ is bounded and }
 I_{\infty}(\xi):=\int_{e^e}^{\infty}\left(\frac{\xi(t)}{t^{H+\theta}}\right)^{-1 / \beta} \varphi \left(\frac{\xi(t)}{t^{H+\theta}}\right) \frac{d t}{t}<+\infty.
 \end{equation}
 \end{theorem}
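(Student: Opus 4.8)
The plan is to follow the Talagrand-type scheme used for GFBM in \cite{WX2022b}, adapting it to the integrated process $Y$. The key structural facts I would first establish (or quote) are: (i) the small ball estimate $\varphi(\varepsilon)\asymp \exp\big(-c\,\varepsilon^{-1/\beta}\big)$ for $Y$ with $\beta=\alpha+\theta+1/2$, which I expect to be proved earlier in the paper from the small ball probabilities of $X$ together with the smoothing effect of the fractional integral; (ii) a self-similarity relation for $Y$, namely $\{Y(ct)\}_{t\ge0}\overset{d}{=}\{c^{H+\theta}Y(t)\}_{t\ge0}$, which follows from the $H$-self-similarity of $X$ and the scaling of the Riemann--Liouville kernel, so that $M(ct)\overset{d}{=}c^{H+\theta}M(t)$ and hence \eqref{def varphi} holds; and (iii) a ``strong local nondeterminism''-type or correlation estimate for $Y$ allowing one to compare $M$ over disjoint dyadic blocks with independent copies (quantitatively, an estimate showing the conditional small-ball probability over $[t_k,t_{k+1}]$, given the past, is comparable to the unconditional one, up to a multiplicative constant). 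These are the same three ingredients Talagrand and Wang--Xiao use; for the integrated process the only genuinely new input is that the kernel $(t-u)^{\theta-1}$ in \eqref{y} is smoothing rather than rough, so the relevant scaling exponent is $H+\theta$ and the small-ball exponent is $\beta=\alpha+\theta+1/2$ rather than the corresponding exponents for $X$ itself.

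For the necessity direction I would argue as follows. Suppose $\xi\in LLC_\infty(M)$. First, boundedness of $\xi(t)/t^{H+\theta}$ is immediate: if it were unbounded along a sequence $t_n\uparrow\infty$, then using $\mathbb P(M(t_n)\le \xi(t_n))=\varphi(\xi(t_n)/t_n^{H+\theta})$ together with self-similarity and the fact that $\varphi(\varepsilon)\to \mathbb P(M(1)<\infty)=1$ (actually $\varphi$ has a nondegenerate range; the point is $\varphi(\varepsilon)$ is bounded below by a positive constant for large $\varepsilon$) one sees $M(t_n)\le\xi(t_n)$ with probability bounded away from $0$ infinitely often, which by a Borel--Cantelli / blocking argument contradicts $\xi\in LLC_\infty$. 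For the integral test, set $t_k=\rho^k$ for a fixed $\rho>1$ and consider the events $A_k=\{M(t_{k+1})\le \xi(t_k)\}$ (or a suitably localized version over $[t_k,t_{k+1}]$). Using monotonicity of $\xi$ and of $M$ one relates $\sum_k \mathbb P(A_k)$ to the series $\sum_k \varphi(\xi(t_k)/t_k^{H+\theta})$, and comparing the sum to the integral $I_\infty(\xi)$ via the extra factor $(\xi(t)/t^{H+\theta})^{-1/\beta}$ — this factor arises precisely because, when passing from a sum over geometric $t_k$ to an integral, one must account for how many dyadic scales contribute at a given level of $\varphi$; it is the derivative-type weight coming from the exponential form of $\varphi$. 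If $I_\infty(\xi)=\infty$ then $\sum_k\mathbb P(A_k)=\infty$, and using the near-independence estimate (iii) to get a lower bound on $\mathbb P(A_k\ \text{i.o.})$ via a second-moment / Kochen--Stone argument, one concludes $M(t_{k+1})\le\xi(t_k)\le\xi(t_{k+1})$ infinitely often, i.e. $\xi\notin LLC_\infty(M)$, a contradiction.

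For the sufficiency direction, assume $\xi(t)/t^{H+\theta}$ is bounded and $I_\infty(\xi)<\infty$. Again take a geometric sequence $t_k=\rho^k$. One wants to show that, almost surely, $M(t)\ge\xi(t)$ for all large $t$. The standard trick is: if $t\in[t_k,t_{k+1}]$ then by monotonicity $M(t)\ge M(t_k)$ and $\xi(t)\le\xi(t_{k+1})$, so it suffices to show $M(t_k)\ge\xi(t_{k+1})$ for all large $k$, i.e. $\sum_k \mathbb P(M(t_k)\le\xi(t_{k+1}))<\infty$ and then invoke Borel--Cantelli (choosing $\rho$, or rather a dyadic refinement and a constant $c_{1,1}$-type doubling bound on $\xi$, so that $\xi(t_{k+1})/t_k^{H+\theta}\le C\,\xi(t_k)/t_k^{H+\theta}$ is still controlled). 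Then $\mathbb P(M(t_k)\le\xi(t_{k+1}))=\varphi(\xi(t_{k+1})/t_k^{H+\theta})\le \varphi\big(C\xi(t_k)/t_k^{H+\theta}\big)$, and one checks $\sum_k \varphi(C\xi(t_k)/t_k^{H+\theta})<\infty$ is equivalent to $I_\infty(\xi)<\infty$ by the same sum-to-integral comparison as above (the $(\cdot)^{-1/\beta}$ weight makes the comparison exact up to constants, using the exponential shape of $\varphi$ and the boundedness of $\xi(t)/t^{H+\theta}$ to absorb the constant $C$). I would need the elementary lemma that, for $\varphi(\varepsilon)\asymp\exp(-c\varepsilon^{-1/\beta})$ and $g$ slowly varying-ish, $\sum_k \varphi(g(\rho^k))<\infty \iff \int^\infty (g(t))^{-1/\beta}\varphi(g(t))\,\tfrac{dt}{t}<\infty$; this is the technical heart of matching the two formulations.

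The main obstacle I anticipate is the near-independence / blocking estimate (iii) for $Y$: because $Y$ is an iterated integral of $X$, its increments on disjoint blocks are not independent and $Y$ is not Markov (unless $\theta\in\mathbb N$ and $X$ is Brownian), so one must quantify the decay of correlations between $\sup_{[t_k,t_{k+1}]}|Y|$ and the $\sigma$-field generated by $Y$ up to $t_k$. For GFBM itself this was handled in \cite{WX2022b} via strong local nondeterminism of $X$; here one has to propagate such an estimate through the Riemann--Liouville operator, controlling the ``memory'' term $\frac{1}{\Gamma(\theta)}\int_0^{t_k}(t-u)^{\theta-1}X(u)\,du$ uniformly for $t\in[t_k,t_{k+1}]$ and showing it is negligible relative to the fresh part $\frac{1}{\Gamma(\theta)}\int_{t_k}^{t}(t-u)^{\theta-1}X(u)\,du$ at the small-ball scale. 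A secondary, more routine difficulty is keeping the constants uniform in the sum-to-integral comparison so that the doubling hypothesis $\xi(2t)\le c_{1,1}\xi(t)$ (used only in the necessity part at $t=0$, Theorem~\ref{Thm-LowerClassesZero}(b)) is exactly what is needed and is not silently required at infinity — at infinity monotonicity of $\xi$ plus boundedness of $\xi(t)/t^{H+\theta}$ already gives enough regularity, which is why Theorem~\ref{Thm-LowerClassesInf} is a clean ``if and only if'' without an extra growth condition.
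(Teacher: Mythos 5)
Your overall Talagrand-type plan is the right family of argument, but the technical core of your sketch would not prove the sharp ``if and only if'' statement. The decisive gap is your discretization: with geometric blocks $t_k=\rho^k$ you reduce matters to $\sum_k\varphi\bigl(C\,\xi(t_k)/t_k^{H+\theta}\bigr)$ and assert this is equivalent to $I_\infty(\xi)<\infty$ ``up to constants''. It is not. First, a constant $C=\rho^{H+\theta}>1$ inside $\varphi$ is not harmless: by \eqref{eq K1} and Lemma \ref{cor4.1}, $\varphi(C\varepsilon)/\varphi(\varepsilon)=\exp\bigl(\Theta(\varepsilon^{-1/\beta})\bigr)$, i.e. it changes $-\log\varphi$ by a multiplicative constant, so geometric blocking plus Borel--Cantelli only yields a Chung-LIL-type statement with non-matching constants, never the integral test. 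Second, the weight $(\xi(t)/t^{H+\theta})^{-1/\beta}$ does not come from comparing a geometric sum with $\int\cdot\,\frac{dt}{t}$ (that comparison introduces no weight at all); it reflects that the correct grid has adaptive spacing $t_{k+1}-t_k\asymp t_k\,(\xi(t_k)/t_k^{H+\theta})^{1/\beta}$, i.e. about $\varepsilon^{-1/\beta}$ points per geometric block, and $\sum_k\varphi(\varepsilon_k)$ over geometric $t_k$ can converge while $I_\infty(\xi)=\infty$ (and vice versa, the weight genuinely changes the dichotomy). Capturing this requires the first maximal inequality, Proposition \ref{lem-MtMu}, which bounds $\mathbb{P}\bigl(\{M(t)\le\nu t^{H+\theta}\}\cap\{M(u)\le\eta\}\bigr)$ by $2\varphi(\nu)\exp\bigl(-(u-t)/(c\,u^{\gamma/(2\beta)}\eta^{1/\beta})\bigr)$; note $u^{\gamma/(2\beta)}\eta^{1/\beta}\asymp u\,\varepsilon^{1/\beta}$ is exactly the adaptive spacing. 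This inequality, entirely absent from your outline, is what controls all intermediate times in the sufficiency part and the nearby pairs in the double sum of the necessity (second-moment) part; the decorrelation inequality (Proposition \ref{Prop-MtMu1}) only handles pairs with $u/t$ large, as in the quoted proof of Lemma 4.5 of \cite{WX2022b}.

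Your ingredient (iii) is also not established by the route you propose, and the proposed route is doubtful: the Riemann--Liouville ``memory'' term $\frac{1}{\Gamma(\theta)}\int_0^{t_k}(t-u)^{\theta-1}X(u)\,du$ is in general not negligible at the small-ball scale, and no such negligibility is needed. The paper's Proposition \ref{lem-MtMu} splits the increment $Y(t_{k+1})-Y(t_k)=I_1+I_2$ with $I_1$ independent of $\sigma\{B(s):s\le t_k\}$ and $I_2$ measurable with respect to it, and removes $I_2$ conditionally via Anderson's inequality; Proposition \ref{Prop-MtMu1} manufactures independence by splitting the noise $B(dx)$ over $\{|x|\le\sqrt{ut}\}$ and $\{|x|>\sqrt{ut}\}$ and controls the two error processes through the fractional-integral moment and maximal bounds of Lemmas \ref{5.4}--\ref{5.5}. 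With these two propositions, the small-ball estimates and Lemmas \ref{pro 4.1}--\ref{lem increase} in hand, the paper then follows \cite{WX2022b} (Sections 3--4) verbatim with $H$ replaced by $H+\theta$; your treatment of the unbounded-ratio case in the necessity direction is consistent with that scheme, but the rest of your sketch, as written, would not close.
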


The proofs of Theorems \ref{Thm-LowerClassesZero} and \ref{Thm-LowerClassesInf} are based on a modification of the Talagrand's   arguments in \cite{El03, Tal96,  WX2022b}, which  will be given in Section \ref{Sec Proofs} below.

Combining Theorems \ref{Thm-LowerClassesZero} and   \ref{Thm-LowerClassesInf}  with Lemma \ref{le3}  below, we  obtain the following  results.

\begin{theorem}\label{Thm-ChungLIL}
Assume $\gamma\in[0,1)$, $\alpha\in(-1/2 +\gamma/2
, 1/2)$ and $\theta>0$.
\begin{itemize}
\item[(a)] There exists a positive constant  $\kappa_{1}\in(0, \infty)$  such that
\begin{equation}\label{3.1}
\liminf _{t \rightarrow 0+} \sup _{0 \le  s \le  t} \frac{|Y(s)|}{t^{H+\theta} /\left(\ln \ln t^{-1}\right)^{\beta }}=\kappa_{1} \quad \text { a.s. }
\end{equation}

\item[(b)]  There exists a positive constant  $\kappa_{2}\in(0, \infty)$  such that
\begin{equation}\label{3.2}
\liminf _{t \rightarrow \infty} \sup _{0 \le  s \le  t} \frac{|Y(s)|}{t^{H+\theta} /(\ln \ln t)^{\beta}}=\kappa_{2} \quad \text { a.s. }
\end{equation}
\end{itemize}
\end{theorem}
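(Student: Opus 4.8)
The plan is to derive Theorem \ref{Thm-ChungLIL} from Theorems \ref{Thm-LowerClassesZero} and \ref{Thm-LowerClassesInf} by testing a one-parameter family of candidate functions against the integral criteria $I_0(\xi)$ and $I_\infty(\xi)$. Concretely, for part (a) I would fix a constant $\kappa>0$ and set $\xi_\kappa(t) = \kappa\, t^{H+\theta}/(\ln\ln t^{-1})^{\beta}$ on $(0,e^e]$; this is nondecreasing and continuous for small $t$, and it satisfies the doubling condition $\xi_\kappa(2t)\le c\,\xi_\kappa(t)$ required in the necessity part. Then $\xi_\kappa(t)/t^{H+\theta} = \kappa/(\ln\ln t^{-1})^{\beta}$ is bounded, and after the substitution $u = \ln\ln t^{-1}$ (so $dt/t = -e^{-u}\,du$ up to signs, with the relevant tail being $u\to\infty$) the integral $I_0(\xi_\kappa)$ becomes, up to constants,
\[
\int^{\infty} u^{1} \, \varphi\!\left(\kappa\, u^{-\beta}\right) e^{-u}\, \cdots \, du,
\]
whose convergence or divergence is governed by the small ball function $\varphi(\ep)$ as $\ep\to 0$. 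Using the small ball estimate for $M(1)$ recalled via Lemma \ref{le3} (of the form $\varphi(\ep)=\mathbb P(M(1)\le\ep)\asymp \exp(-c\,\ep^{-1/\beta})$ up to polynomial corrections), one finds $\varphi(\kappa u^{-\beta})\asymp \exp(-c\,\kappa^{-1/\beta} u)$, so the integrand behaves like $u\exp\big(-(c\kappa^{-1/\beta}+1)u\big)$ after including the $e^{-u}$ from the change of variables and the polynomial factor $u^{-1/\beta}\cdot u$ from $(\xi/t^{H+\theta})^{-1/\beta}$. This integral always converges, which at first glance seems to make every $\xi_\kappa$ lie in $LLC_0(M)$; the resolution is that the exponent inside $\varphi$ together with the $dt/t=e^{-u}du$ term must be compared more carefully, and the correct scaling is $\xi_\kappa(t)=\kappa\,t^{H+\theta}/(\ln\ln t^{-1})^{\beta}$ giving $\varphi(\kappa(\ln\ln t^{-1})^{-\beta})\asymp (\ln t^{-1})^{-c\kappa^{-1/\beta}}$, so that $I_0(\xi_\kappa)\asymp\int^{e^e}(\ln\ln t^{-1})\,(\ln t^{-1})^{-c\kappa^{-1/\beta}}\,\frac{dt}{t}$, and a further substitution $v=\ln t^{-1}$ turns this into $\int^\infty (\ln v)\, v^{-c\kappa^{-1/\beta}}\,dv$, which converges iff $c\kappa^{-1/\beta}>1$, i.e. iff $\kappa<c^{\beta}$.

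Granting this, the argument is the standard zero-one dichotomy. Let $\kappa_1 := c^{\beta}$ (the threshold value, with $c$ the constant from the small ball asymptotics in Lemma \ref{le3}). For $\kappa<\kappa_1$ we get $I_0(\xi_\kappa)<\infty$, hence by Theorem \ref{Thm-LowerClassesZero}(a) $\xi_\kappa\in LLC_0(M)$, which means $M(t)\ge \xi_\kappa(t)$ eventually as $t\to0+$, i.e. $\liminf_{t\to0+} M(t)(\ln\ln t^{-1})^{\beta}/t^{H+\theta}\ge\kappa$. Letting $\kappa\uparrow\kappa_1$ gives the lower bound $\ge\kappa_1$. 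For $\kappa>\kappa_1$ we get $I_0(\xi_\kappa)=+\infty$; since $\xi_\kappa$ satisfies the doubling hypothesis, the contrapositive of Theorem \ref{Thm-LowerClassesZero}(b) shows $\xi_\kappa\notin LLC_0(M)$, which by Definition \ref{Def-LowerClasses}(b) places $\xi_\kappa$ in $LUC_0(M)$ — so $M(t_n)\le\xi_\kappa(t_n)$ along some sequence $t_n\downarrow0$, giving $\liminf\le\kappa$; letting $\kappa\downarrow\kappa_1$ gives the upper bound $\le\kappa_1$. Combining, the $\liminf$ equals $\kappa_1$ almost surely, and $\kappa_1\in(0,\infty)$ because $c\in(0,\infty)$; this proves \eqref{3.1}. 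Part (b) is entirely parallel: take $\xi_\kappa(t)=\kappa\,t^{H+\theta}/(\ln\ln t)^{\beta}$ on $[e^e,\infty)$, observe it is nondecreasing, and run the same change of variables ($v=\ln t$, then the interior of $\varphi$) to see $I_\infty(\xi_\kappa)<\infty\iff\kappa<\kappa_2$ for a threshold $\kappa_2=c^{\beta}$; then apply Theorem \ref{Thm-LowerClassesInf} in both directions. (In fact one expects $\kappa_1=\kappa_2$, both equal to $c^{\beta}$ with $c$ the small-ball constant, but the statement only claims finiteness and positivity, which is all the dichotomy delivers.)

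The main obstacle is the careful asymptotic analysis of the integrals $I_0(\xi_\kappa)$ and $I_\infty(\xi_\kappa)$: one needs the small ball probability $\varphi(\ep)$ precisely enough — not merely up to the exponential order but with control of the sub-exponential (polynomial in $\ep^{-1/\beta}$) corrections — to be sure that these corrections do not tip the convergence/divergence verdict at the critical exponent, and to confirm that the same constant $c$ governs both the $t\to0$ and $t\to\infty$ regimes so that $\kappa_1$ and $\kappa_2$ are genuinely finite and strictly positive. This is exactly where Lemma \ref{le3} is invoked: it must supply $\varphi(\ep)=\exp\big(-(c+o(1))\ep^{-1/\beta}\big)$ as $\ep\to0$ (or a two-sided version with possibly different constants $c_-\le c_+$, which still suffices to sandwich $\kappa_1,\kappa_2$ in $(0,\infty)$). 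A secondary, routine point is checking the regularity hypotheses of Theorems \ref{Thm-LowerClassesZero}--\ref{Thm-LowerClassesInf} for $\xi_\kappa$ — monotonicity on the relevant interval, continuity, and the doubling bound $\xi_\kappa(2t)\le c_{1,1}\xi_\kappa(t)$ — but since $\xi_\kappa$ is, up to the slowly varying iterated-logarithm factor, a pure power $t^{H+\theta}$, all of these are immediate for $t$ near $0$ (resp. near $\infty$).
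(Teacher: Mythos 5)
There is a genuine gap, and it sits exactly where you wave at Lemma \ref{le3}. Your dichotomy identifies the limit as a single constant $\kappa_1=c^{\beta}$ only if the small ball probability has a \emph{sharp} constant, $\varphi(\ep)=\exp\bigl(-(c+o(1))\,\ep^{-1/\beta}\bigr)$. Neither your proposal nor the paper establishes this: Theorem \ref{Thm-SmallBall} gives only two-sided bounds $\kappa_3\,\ep^{-1/\beta}\le-\log\varphi(\ep)\le\kappa_4\,\ep^{-1/\beta}$ with possibly different constants. With those bounds your argument delivers (as you concede in your fallback) only that the $\liminf$ lies a.s.\ between two positive finite values, say in $[\kappa_3^{\beta},\kappa_4^{\beta}]$ up to constants; it does \emph{not} show the $\liminf$ is almost surely equal to a constant, which is what \eqref{3.1} and \eqref{3.2} assert. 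The paper closes this gap with Lemma \ref{le3}, which is not a small-ball estimate (you miscite it as one) but a zero--one law: using the covariance decay $e^{-(H+\theta)t}\,\EE[Y(e^t)Y(1)]=O(e^{-\kappa t})$, Maruyama's criterion, and Proposition \ref{TX07-Prop3.3}, it shows that the Lamperti transform of $Y$ is ergodic and hence that each $\liminf$ equals some constant $\kappa_i'\in[0,\infty]$ a.s.; only then do the lower/upper class tests with $\xi_\lambda$ pin that constant into $(0,\infty)$. Without either a sharp small-ball constant or this ergodicity argument, your proof does not yield the theorem as stated.

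A secondary, but real, logical slip: from $I_0(\xi_\kappa)=\infty$ and the doubling condition, the contrapositive of Theorem \ref{Thm-LowerClassesZero}(b) gives only $\xi_\kappa\notin LLC_0(M)$, i.e.\ the eventual lower bound fails on a set of \emph{positive} probability. Your claim that this ``by Definition \ref{Def-LowerClasses}(b) places $\xi_\kappa$ in $LUC_0(M)$'' is not valid: $LUC_0$ requires the sequence behaviour almost surely, and ``not $LLC_0$'' is not its complement at the level of probability-one statements. The paper's assertion that divergence forces $\xi_\lambda\in LUC_0(M)$ rests on the underlying necessity proof (a second-moment/Borel--Cantelli argument as in the appended Lemma 4.5 of \cite{WX2022b}), which shows the events $\{M(t_n)\le\xi(t_n)\}$ occur infinitely often with probability one — again a statement your proposal assumes rather than proves.
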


Limit theorems of the forms \eqref{3.1} and \eqref{3.2}  are known as   Chung's LIL, which was initial studied by  Chung \cite{Chung}.   We refer to Bingham \cite{Bin86},   Monrad and Rootz\'en \cite{MR95},  Li and Shao \cite{LiS01}, Buchmann and Maller  \cite{BM11} and the references therein for more information.

It is known that small ball probability estimates are essential for studying the lower classes of a stochastic process and Chung's LILs (cf. \cite{Rev90, Tal96}).  In the proofs of our main theorems,  the  following small ball probability estimates  play important roles.

\begin{theorem}\label{Thm-SmallBall} Let $\gamma\in[0,1)$, $\alpha\in (-1/2+\gamma/2, 1/2)$ and $\theta>0$. Then there exist some constants $\kappa_3, \kappa_4\in (0,\infty)$  such that for any $\varepsilon\in (0,1)$,
  \begin{align}\label{eq log varphi}
  \kappa_{3}  \varepsilon^{-\frac{1}{\beta}} \le  - \log\mathbb P\left(\sup_{t\in [0,1]}|Y(t)| \le \varepsilon \right)\le  \kappa_{4}   \varepsilon^{-\frac{1}{\beta}}.
 \end{align}
\end{theorem}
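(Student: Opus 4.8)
The plan is to reduce the problem to known small ball estimates for the GFBM $X$ and then exploit the smoothing effect of the fractional integration operator $I^\theta$. Recall from \cite{WX2022a} that the GFBM $X$ satisfies a small ball estimate of order $\varepsilon^{-1/H}$, i.e. there are constants $c,C>0$ with $\exp(-C\varepsilon^{-1/H})\le \mathbb P(\sup_{[0,1]}|X(t)|\le\varepsilon)\le \exp(-c\varepsilon^{-1/H})$. Since $Y$ is a Gaussian process with stationary-type increments of order $\beta=\alpha+\theta+1/2$ (the self-similarity index of $Y$ is $H+\theta$ and its canonical metric behaves, near the diagonal, like $|t-s|^{\beta}$ up to logarithmic corrections — this is precisely what must be verified first), one expects, by the general principle of Li--Shao \cite{LiS01} relating small deviations to metric entropy, that $-\log\varphi(\varepsilon)\asymp \varepsilon^{-1/\beta}$.

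The key steps, in order, are as follows. \textbf{Step 1.} Establish the two-sided bound on the $L^2$-modulus of $Y$: show that there exist constants $0<c_1\le c_2<\infty$ such that $c_1|t-s|^{\beta}\le \|Y(t)-Y(s)\|_{L^2(\Omega)}\le c_2|t-s|^{\beta}$ for $s,t\in[0,1]$ (or at least $\|Y(t)-Y(s)\|_{L^2}\le c_2|t-s|^{\beta}$ together with a nondegeneracy/strong local nondeterminism lower bound). This follows by writing $Y(t)-Y(s)=\frac1{\Gamma(\theta)}\int_0^t(t-u)^{\theta-1}X(u)\,du-\frac1{\Gamma(\theta)}\int_0^s(s-u)^{\theta-1}X(u)\,du$, using the covariance structure of $X$ from \cite{PT2019, IPT22}, and carefully estimating the resulting double integral; the exponent $\beta=\alpha+\theta+1/2$ arises because integrating a process with Hölder exponent $H-\gamma/2$... more precisely because the kernel $(t-u)^{\theta-1}$ gains $\theta$ degrees of smoothness while the integrand $X$ contributes its own regularity encoded through $\alpha$. \textbf{Step 2.} The upper bound $-\log\varphi(\varepsilon)\le \kappa_4\varepsilon^{-1/\beta}$: use the Gaussian correlation-type lower bound for $\mathbb P(\sup|Y|\le\varepsilon)$ coming from the entropy upper bound of Step 1, via Talagrand's lower bound for small ball probabilities (or the Kuelbs--Li / Li--Shao machinery). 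Concretely, partition $[0,1]$ into $\sim \varepsilon^{-1/\beta}$ subintervals and use independence-after-conditioning or the Anderson inequality together with the covering number estimate $N(\varepsilon)\lesssim \varepsilon^{-1/\beta}$. \textbf{Step 3.} The lower bound $-\log\varphi(\varepsilon)\ge \kappa_3\varepsilon^{-1/\beta}$: this requires strong local nondeterminism (SLND) of $Y$, which must be derived from SLND of $X$ (established in \cite{WX2022a}) together with the fact that fractional integration preserves an appropriate nondeterminism property; then apply the standard argument (e.g. \cite{Tal96}, or Monrad--Rootzén \cite{MR95}) that turns SLND plus the metric lower bound into the small ball lower bound by a chaining/conditioning argument over $\sim \varepsilon^{-1/\beta}$ well-separated points.

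The main obstacle I expect is \textbf{Step 3}, the lower bound, because it hinges on transferring strong local nondeterminism through the operator $I^\theta$. Fractional integration is a smoothing, non-local operation, so $Y(t)$ depends on the entire history $\{X(u):u\le t\}$; showing that the conditional variance $\mathrm{Var}(Y(t)\mid Y(s_1),\dots,Y(s_n))$ is still bounded below by $c\,\min_j|t-s_j|^{2\beta}$ is delicate. One route is to work with the spectral/harmonizable representation of $Y$ — composing the stochastic integral representation \eqref{eq X} of $X$ with the Riemann--Liouville kernel yields an explicit moving-average (or harmonizable) kernel for $Y$, and SLND can then be read off from decay/oscillation properties of that kernel using the approach of Pitt or of Wang--Xiao. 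A secondary technical point is handling the behavior near $t=0$, where the normalization in \eqref{def varphi} interacts with the singularity $|u|^{-\gamma/2}$ in the integrand of $X$; but by self-similarity of $Y$ with index $H+\theta$ this is automatically absorbed, which is exactly why \eqref{def varphi} is stated in scale-invariant form. Once Steps 1--3 are in place, \eqref{eq log varphi} follows immediately.
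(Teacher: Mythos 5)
Your plan has two genuine problems, one with the inputs and one with the mechanism. First, the quoted small ball rate for the GFBM $X$ is wrong when $\gamma>0$: Theorem 1.2 of \cite{WX2022a} gives $\mathbb P(\sup_{t\in[0,1]}|X(t)|\le\varepsilon)\ge\exp\left(-\kappa_5\,\varepsilon^{-1/(\alpha+1/2)}\right)$, not a rate $\varepsilon^{-1/H}$; the two exponents coincide only for $\gamma=0$. This is not cosmetic: the whole point of the theorem is that the answer is governed by $\beta=\alpha+\theta+1/2$ rather than by $H+\theta$, and feeding $\varepsilon^{-1/H}$ into any transfer argument produces the wrong exponent $1/(H+\theta)$. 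Second, your Step 1 cannot hold in the claimed generality: if $\|Y(t)-Y(s)\|_{L^2}\le c_2|t-s|^{\beta}$ on $[0,1]$ with $\beta>1$ (which happens as soon as $\theta\ge 1/2-\alpha$), the $L^2$-derivative of $Y$ vanishes identically and $Y$ would be constant. For integrated processes the canonical-metric exponent and the small-ball exponent genuinely differ (integrated Brownian motion has metric exponent $1$ but small-ball exponent $3/2$), so the covering-number count $N(\varepsilon)\lesssim\varepsilon^{-1/\beta}$ driving your Step 2 is unavailable; moreover the tools you invoke there (Anderson's inequality, conditioning over a partition) yield upper bounds on $\mathbb P(\sup|Y|\le\varepsilon)$, i.e.\ the other half of \eqref{eq log varphi}, not the probability lower bound you are after.

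The paper's proof is organized differently and avoids both issues. For the probability lower bound it never touches the metric of $Y$: it writes $Y=R_\theta X$ with $R_\theta$ the Riemann--Liouville operator on $C[0,1]$, combines the entropy numbers $e_n(R_\theta)\asymp n^{-\theta}$ (Proposition 6.1 in \cite{LL99}) with the correct rate $\varepsilon^{-1/(\alpha+1/2)}$ for $X$, and applies the Li--Linde transfer theorem (Theorem 6.1 in \cite{LL99}); the arithmetic $ar/(a+r)$ with $a=1/(\alpha+1/2)$ and $r=1/\theta$ gives exactly $1/\beta$. For the probability upper bound --- the step you flag as the main obstacle, transferring SLND through the fractional integration --- no such transfer is needed: decompose $Y=Y_1+Y_2$ according to whether the driving noise lives on $(-\infty,0)$ or on $(0,\cdot)$, discard $Y_1$ by Anderson's inequality, use the stochastic Fubini theorem to write $Y_2(t+h)=\frac{\Gamma(\alpha+1)}{\Gamma(\alpha+\theta+1)}\int_0^{t+h}(t+h-w)^{\alpha+\theta}w^{-\gamma/2}B(dw)$, and condition on $\sigma(B(s):s\le t)$ (the Brownian past, not the past of $X$ or $Y$): the conditional variance is an explicit integral bounded below by $c\,h^{2\beta}$, and repeated conditioning over a grid of $\lceil(c\varepsilon)^{-1/\beta}\rceil$ points finishes the estimate. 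So while you identify the correct target exponent and the right circle of ideas (Kuelbs--Li/Li--Linde), the proposal as written rests on an incorrect input for $X$, an impossible two-sided metric estimate, and leaves unproved precisely the direction you correctly single out as delicate.
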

  The small ball probabilities are not only useful for proving lower classes in Theorems \ref{Thm-LowerClassesZero} and   \ref{Thm-LowerClassesInf},   and Chung's LIL in Theorem \ref{Thm-ChungLIL},  but also have many other applications.
  We refer to Lifshits \cite{Lif99} and Li and Shao \cite{LiS01} for  more information.  Particularly,
 Kuelbs and Li \cite{KuL93} discovered a remarkable link between the small ball estimate and the metric entropy number of the unit balls of reproducing kernel Hilbert space generated by a general Gaussian measure.  Li and Linde \cite{LL99} successfully removed  the   regularity assumption imposed on the unknown function in the link, which was assumed in  \cite{KuL93}.  By using the approach in \cite{LL99}   and the   small ball estimates of GFBM   in \cite{WX2022a},  we obtain   the lower bound of the small ball probability in Theorem \ref{Thm-SmallBall}.

The rest of this paper is organized as follows.  In Section  \ref{Proof of Thm-SmallBall},  we prove the  small ball probability estimates of $Y$ in Theorem \ref{Thm-SmallBall}.
  In Section \ref{Sec Proofs}, we prove two important  maximal inequalities, based on which Theorems \ref{Thm-LowerClassesZero} and \ref{Thm-LowerClassesInf}  can be proved by following the same lines in \cite{Tal96, WX2022b}.    Section \ref{Proof of Thm-ChungLIL} is devoted to the proofs of  Chung's LILs in Theorem \ref{Thm-ChungLIL} by using  some zero-one laws for the lower class of $Y$.

\section{Small ball probability estimates}
\label{Proof of Thm-SmallBall}
In this section, we prove  the small ball probability estimates of $Y$  presented in Theorem \ref{Thm-SmallBall}. As mentioned in the introduction, we derive the upper bound of the small ball probability of $Y$   by using the strong local non-determinism property (see, e.g.,   Monard and Rootz\'en \cite{MR95}), and obtain the lower bound by means of the link between the small ball probability and the metric entropy established  by Kuelbs and Li \cite{KuL93} and Li and Linde \cite{LL99}.  Let us prove the upper and lower bounds of the small ball probability of $Y$, respectively.
 \subsection{Upper bound}
Let us first prove the   the upper estimates of small ball probability of $Y$ by using the strong local  non-determinism property (see, e.g.,    Monard and Rootz\'en \cite{MR95},  Xiao \cite{Xiao2008})
\begin{lemma}\label{lem upper} Let $\gamma\in[0,1)$, $\alpha\in (-1/2+\gamma/2, 1/2)$ and $\theta>0$. Then there exists some constant  $\kappa_3 \in (0,\infty)$  such that for any $\varepsilon\in (0,1)$,
  \begin{align}\label{eq log varphi upper}
   \mathbb P\left(\sup_{t\in [0,1]}|Y(t)| \le \varepsilon \right)\le  \exp\left( - \kappa_{3}  \varepsilon^{-\frac{1}{\beta}} \right).
    \end{align}
\end{lemma}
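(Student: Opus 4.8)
The plan is to pass to the white-noise (moving-average) representation of $Y$, extract an explicit Riemann--Liouville-type causal kernel, deduce from it a one-sided strong local nondeterminism estimate, and then run the classical conditioning argument of Talagrand and Monrad--Rootz\'en. \emph{Step 1 (representation).} Substituting \eqref{eq X} into \eqref{y} and applying stochastic Fubini, one obtains $Y(t)=\int_{\mathbb R}\Phi_t(v)\,|v|^{-\gamma/2}\,B(dv)$ with $\Phi_t(v)=\frac1{\Gamma(\theta)}\int_0^t(t-u)^{\theta-1}\bigl((u-v)_+^{\alpha}-(-v)_+^{\alpha}\bigr)\,du$. The decisive point is the explicit form of the \emph{causal} part: for $0\le v<t$ the term $(-v)_+^{\alpha}$ vanishes and the substitution $u=v+(t-v)s$ gives $\Phi_t(v)=c_\theta\,(t-v)^{\alpha+\theta}=c_\theta\,(t-v)^{\beta-1/2}$, where $c_\theta\in(0,\infty)$ is a Beta-function constant (finite since $\alpha>-1$, $\theta>0$); moreover $\Phi_t(v)=0$ for $v\ge t$, while on $(-\infty,0)$ one checks $\Phi_t\in L^2(|v|^{-\gamma}\,dv)$ using $\alpha<\tfrac12+\tfrac\gamma2$ and $\gamma<1$. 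Thus, restricted to the noise on $[0,t]$, $Y$ is a weighted Riemann--Liouville process of order $\beta-\tfrac12$, which is the source of the exponent $1/\beta$.

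\emph{Step 2 (one-sided local nondeterminism).} Fix a large integer $N$, set $t_j=j/N$, and let $\mathcal G_j=\sigma\{B(A):A\subseteq(-\infty,t_j]\}$. Since the kernel $\Phi_{t_{j+1}}$ is supported in $(-\infty,t_{j+1})$, split $Y(t_{j+1})$ into a $\mathcal G_j$-measurable part (the integral over $v\le t_j$) and the fresh increment $W_j:=c_\theta\int_{t_j}^{t_{j+1}}(t_{j+1}-v)^{\beta-1/2}|v|^{-\gamma/2}\,B(dv)$, which is independent of $\mathcal G_j$. As $Y(t_1),\ldots,Y(t_j)$ are $\mathcal G_j$-measurable, for jointly Gaussian variables,
\[
\mathrm{Var}\bigl(Y(t_{j+1})\mid Y(t_1),\ldots,Y(t_j)\bigr)\ \ge\ \mathrm{Var}(W_j)\ =\ c_\theta^2\int_{t_j}^{t_{j+1}}(t_{j+1}-v)^{2\beta-1}|v|^{-\gamma}\,dv .
\]
For $1\le j\le N-1$ we have $|v|^{-\gamma}\ge1$ on $(t_j,t_{j+1})\subseteq(0,1]$, so this conditional variance is at least $\tfrac{c_\theta^2}{2\beta}(t_{j+1}-t_j)^{2\beta}=\tfrac{c_\theta^2}{2\beta}N^{-2\beta}$ --- a \emph{deterministic} lower bound (note $\beta>0$ since $\alpha+\theta>-\tfrac12$, and $2\beta-1>-1$ so the integral converges).

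\emph{Step 3 (peeling).} Since $\mathbb P(\sup_{[0,1]}|Y|\le\varepsilon)\le\mathbb P(|Y(t_j)|\le\varepsilon,\ 1\le j\le N)$, and the conditional law of $Y(t_{j+1})$ given $Y(t_1),\ldots,Y(t_j)$ is Gaussian with deterministic variance $\sigma_{j+1}^2\ge\tfrac{c_\theta^2}{2\beta}N^{-2\beta}$, unimodality of the Gaussian density yields $\mathbb P(|Y(t_{j+1})|\le\varepsilon\mid Y(t_1),\ldots,Y(t_j))\le 2\varepsilon/(\sqrt{2\pi}\,\sigma_{j+1})\le C_1\varepsilon N^{\beta}$ a.s. Conditioning off the factors one at a time gives $\mathbb P(\sup_{[0,1]}|Y|\le\varepsilon)\le(C_1\varepsilon N^{\beta})^{N-1}$. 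Choosing $N=N_\varepsilon:=\lfloor(2C_1\varepsilon)^{-1/\beta}\rfloor$ makes $C_1\varepsilon N^{\beta}\le\tfrac12$, so for small $\varepsilon$ one gets $\mathbb P(\sup_{[0,1]}|Y|\le\varepsilon)\le 2^{-(N_\varepsilon-1)}\le\exp(-\kappa_3\varepsilon^{-1/\beta})$ for a suitable $\kappa_3>0$; adjusting $\kappa_3$ downward covers $\varepsilon$ bounded away from $0$, where $\mathbb P(\sup_{[0,1]}|Y|\le\varepsilon)<1$ anyway. This is \eqref{eq log varphi upper}.

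The only non-routine ingredients are Steps 1--2: obtaining the explicit causal kernel $c_\theta(t-v)^{\beta-1/2}$ via stochastic Fubini and converting it into the sharp lower bound $\mathrm{Var}(W_j)\gtrsim(t_{j+1}-t_j)^{2\beta}$ with the correct power $2\beta$; once the fresh-noise variance is controlled, Step 3 is the standard Talagrand/Monrad--Rootz\'en peeling. A technical nuisance is checking $L^2$-integrability of $\Phi_t$ near $v=-\infty$; this can be sidestepped by writing $Y=Z+R$ with $Z(t)=c_\theta\int_0^t(t-v)^{\beta-1/2}|v|^{-\gamma/2}\,B(dv)$ and $R$ the independent, a.s.\ smooth remainder, discarding $R$ by Anderson's inequality, and arguing with $Z$ alone.
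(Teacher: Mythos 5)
Your proposal is correct and follows essentially the same route as the paper: stochastic Fubini to extract the causal Riemann--Liouville kernel $c_\theta(t-v)^{\alpha+\theta}$, the fresh-noise conditional variance bound $\gtrsim h^{2\beta}$ (the paper's one-sided strong local nondeterminism for $Y_2$), and the grid-conditioning/peeling with $N\asymp\varepsilon^{-1/\beta}$, with Anderson's inequality disposing of the noncausal part. The only cosmetic differences are that you condition $Y$ directly on the noise filtration rather than first splitting off $Y_1$, and you bound the conditional probability by the Gaussian density sup instead of $\Phi(c\varepsilon n^{\beta})-\Phi(-c\varepsilon n^{\beta})$; these are equivalent up to constants.
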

\begin{proof}
Notice that
\begin{equation*}
\begin{split}
Y(t) =&\, \frac{1}{\Gamma(\theta)} \int_{0}^{t} (t-u)^{\theta-1}du \int_{-\infty}^{0}B(d w) \left((u-w)^{\alpha}-(-w)^{\alpha}\right)(-w)^{-\frac{\gamma}{2}} \\
&+\frac{1}{\Gamma(\theta)} \int_{0}^{t}(t-u)^{\theta-1}du \int_{0}^{u}B(dw)(u-w)^{\alpha} w^{-\frac{\gamma}{2}}  \\
=:&\, Y_{1}(t)+Y_{2}(t).
\end{split}
\end{equation*}
By using the Anderson inequality \cite{And1955}, we have
\begin{equation}\label{2.5}
\begin{split}
\mathbb{P}\left(\sup _{t \in[0,1]}|Y(t)| \leq \varepsilon\right) & =\mathbb{E}\left[\mathbb{P}\left(\sup _{t \in[0,1]}|Y_{1}(t)+Y_{2}(t)| \leq \varepsilon \mid Y_{1}(t), t \in[0,1]\right)\right] \\
& \leq \mathbb{P}\left(\sup _{t \in[0,1]}|Y_{2}(t)| \leq \varepsilon\right).
\end{split}
\end{equation}
For any $0\le  h\le  1$ and $0\le  t \le  1-h$, by stochastic Fubini theorem, we have
\begin{equation*}
\begin{split}
Y_{2}(t+h) =&\,\frac{1}{\Gamma(\theta)} \int_{0}^{t+h}(t+h-u)^{\theta-1}du  \int_{0}^{u}B(d w)(u-w)^{\alpha} w^{-\frac{\gamma}{2}} \\
 =&\,\frac{1}{\Gamma(\theta)} \int_{0}^{t+h} w^{-\frac{\gamma}{2}}B(d w)  \int_{w}^{t+h}du(t+h-u)^{\theta-1}(u-w)^{\alpha}\\
=&\, \frac{\Gamma(\alpha+1)}{\Gamma(\alpha+\theta+1)}\int_{0}^{t+h}(t+h-w)^{\alpha+\theta} w^{-\frac{\gamma}{2}}B(d w),
\end{split}
\end{equation*}
where  a change of variable with $u =t+h-(t+h-w)x$ is used in the last step.

 We further  divide   $Y_2$ into two terms:
\begin{equation}\label{Z2}
\begin{split}
Y_{2}(t+h)
 = \,  \frac{\Gamma(\alpha+1)}{\Gamma(\alpha+\theta+1)}\left[\int_{0}^{t}+\int_{t}^{t+h}\right](t+h-w)^{\alpha+\theta} w^{-\frac{\gamma}{2}}B(d w).
\end{split}
\end{equation}
Here,  the first integral in \eqref{Z2} is measurable with respect to  $\sigma(B(s): 0\le  s \leq t)$ and the second integral is independent of  $\sigma(B(s): 0\le  s \leq t)$. Since  $\sigma(Y_2(s): 0\le  s \leq t) \subseteq \sigma(B(s): 0\le  s \leq t)$, we have  that for any $0\leqslant t+h \leqslant 1$,
\begin{align}\label{eq SLND Y2}
\begin{split}
\operatorname{Var}\left(Y_{2}(t+h) \mid Y_{2}(s): 0 \leq s \leq t\right) & \ge  \operatorname{Var}\left(Y_{2}(t+h) \mid B(s): 0 \leq s \leq t\right) \\
& = \frac{\Gamma^{2}(\alpha+1)}{\Gamma^{2}(\alpha+\theta+1)}\int_{t}^{t+h}(t+h-w)^{2(\alpha+\theta)} w^{-\gamma}dw  \\
& \ge   \frac{\Gamma^{2}(\alpha+1)}{2\beta\Gamma^{2}(\alpha+\theta+1)}(t+h)^{-\gamma} h^{2\beta}\\
& \ge  \frac{\Gamma^{2}(\alpha+1)}{2\beta\Gamma^{2}(\alpha+\theta+1)} h^{2\beta}.
\end{split}
\end{align}
 Since conditional distributions in Gaussian processes are Gaussian, it follows from \eqref{eq SLND Y2} that
\begin{align*}
     \mathbb{P}\big( \left|Y_{2}(k/n)\right| \leq   \varepsilon \; |\; Y_{2}(j/n)=x_j, \,1\le j\le k-1 \big)
 \le   \,     \Phi\left(c\varepsilon n^{\beta}\right)- \Phi\left(-c\varepsilon n^{\beta} \right),
\end{align*}
where $\Phi$ is the distribution function  of a standard normal random variable, and $c= \frac{\sqrt{2\beta}\Gamma (\alpha+\theta+1)}{\Gamma (\alpha+1)}$. Thus, by repeated conditioning,
 \begin{align*}
\mathbb{P}\left(\sup _{t \in[0,1]}|Y_{2}(t)| \leq \varepsilon\right) \le &\,  \mathbb{P}\left(\sup _{1\le k\le n}\left|Y_{2}\left({k}/{n}\right)\right| \leq \varepsilon\right)\\
 \le  &\,   \left( \Phi\left(c\varepsilon n^{\beta}\right)- \Phi\left(-c\varepsilon n^{\beta} \right)   \right)^n.
\end{align*}
Choosing $n=\lceil(c\varepsilon)^{-1/\beta}\rceil\ge 2$, we get that
\begin{align*}
    \mathbb{P}\left(\sup _{t \in[0,1]}|Y_2(t)| \leq \varepsilon\right)\le \exp\left( - \kappa_{3}  \varepsilon^{-\frac{1}{\beta}} \right).
\end{align*}
This, together with \eqref{2.5}, implies the inequality \eqref{eq log varphi upper}. The proof is complete.
  \end{proof}

\subsection{Lower  bound} Let $\mu$ denote a centered Gaussian measure on a real separable Banach space $E$ with norm $||\cdot||$ and dual $E^*$. Recall that if $(E, d)$ is a  metric space and $A$ is a compact subset of $(E, d)$, then the metric entropy of $A$ is $\log N(A, \varepsilon)$,  where $N(A, \varepsilon)$ is the minimum covering number,
$$N(A, \varepsilon) := \min \left\{n \geq 1: \exists \, x_{1}, \ldots, x_{n} \in A \text { s.t.} \bigcup_{j=1}^{n} B_{\varepsilon}(x_j)  \supseteq A\right\},$$
and $ B_{\varepsilon}(x_0) = \left\{x: d(x,x_0)<\varepsilon\right\} $ is the open ball of radius $\varepsilon$ centered at $x_0$. The Hilbert space $H_\mu$ generated by $\mu$ can be described as the completion of the range of the mapping $S: E^* \rightarrow E$ defined via the Bochner integral,
$$ Sa = \int_E x a(x) d\mu(x), \quad a \in E^*,$$
and the completion is in the inner product norm
$$ \langle Sa, Sb\rangle_\mu =  \int_E a(x) b(x)d\mu(x), \quad a, b \in E^* .  $$
Lemma 2.1 in \cite{Kue76} presents the details of this construction; See also Ledoux and Talagrand \cite{LT91}.
Since the unit ball $B_\mu$ of $H_\mu$ is always compact, $B_\mu$ has a finite metric entropy.
For any compact operator $v$ from a Banach space $E$ into another one, we borrow the definition of the $n$-th dyadic entropy number of $v$ from \cite[Page 1560]{LL99}, \begin{equation*}   e_n(v) = \inf \left\{ \varepsilon > 0: \,  N(v(B_E), \varepsilon) \leq 2^{n-1} \right\}. \end{equation*}
Let $X = (X(t))_{t\in T}$ be a centered Gaussian process with a.s. continuous sample paths and let $K: [0,1]^2 \rightarrow \mathbb{R}$ be a measurable kernel such that
$$(I_K f)(t) = \int_0^1 K(t,s)f(s)ds $$
is an operator from $C[0,1]$ into itself.
We now present a very useful result for the lower bound of small ball probability of $Y$, which is a continuous centered Gaussian stochastic process constructed by
$$ Y(t) = \int_0^1 K(t,s)X(s)ds. $$
We write $f(x) \preceq g(x)$ as $x \rightarrow 0$ if $\lim\sup_{x\rightarrow 0} f(x)/g(x) < \infty.$
\begin{proposition}{\cite[Theorem 6.1]{LL99}}
\label{prop thm6.1 in LL99}
    If there exist some constants  $a > 0$, $r > 0$ and $b, p \in \mathbb{R}$ such that
\begin{equation*}
    -\log  \mathbb P\left(\sup_{t\in [0,1]} |X(t)|\le  \varepsilon \right) \preceq \varepsilon^{-a} \left( \log \frac{1}{\varepsilon}\right)^b
\end{equation*}
    and
    $$ e_n(I_K) \preceq n^{-1/r}(1+\log n)^p, $$
    then
    \begin{equation*}
    -\log  \mathbb P\left(\sup_{t\in [0,1]} |Y(t)|\le  \varepsilon \right) \preceq \varepsilon^{-ar/(a+r)} \left( \log \frac{1}{\varepsilon}\right)^{(ap+b)r/(a+r)}.
\end{equation*}
\end{proposition}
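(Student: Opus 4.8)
The plan is to follow Li and Linde \cite{LL99}, where this statement is Theorem~6.1, and only to outline the argument since it is quoted verbatim. Everything rests on two ingredients: the Kuelbs--Li two-way link \cite{KuL93}, in the sharpened form of \cite{LL99}, between the small ball function of a centered Gaussian measure and the metric entropy (equivalently, the dyadic entropy numbers) of the canonical embedding of its reproducing kernel Hilbert space; and the fact that the RKHS of $Y=I_KX$ is obtained from that of $X$ simply by applying the operator $I_K$.

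First I would translate the hypothesis on $X$ into an entropy statement. Write $\mu_X$ for the law of $X$ on $E=C[0,1]$ and $j_X\colon H_{\mu_X}\hookrightarrow E$ for the canonical embedding. By the link, a bound $-\log\mathbb P(\sup_{[0,1]}|X|\le\varepsilon)\preceq\varepsilon^{-a}(\log1/\varepsilon)^{b}$ is equivalent to $\log N(B_{\mu_X},\varepsilon)\preceq\varepsilon^{-2a/(2+a)}(\log1/\varepsilon)^{2b/(2+a)}$, hence to the entropy number bound $e_n(j_X)\preceq n^{-(1/2+1/a)}(\log n)^{b/a}$; only the ``$\preceq$'' half of these equivalences is needed. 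Next, since $Y(t)=\int_0^1K(t,s)X(s)\,ds$, the law $\mu_Y$ is the image of $\mu_X$ under the bounded operator $I_K$ on $C[0,1]$, so $H_{\mu_Y}=I_K(H_{\mu_X})$ endowed with the quotient norm and its unit ball $B_{\mu_Y}$ is the closure in $E$ of $I_Kj_X(B_{H_{\mu_X}})$; consequently $e_n(j_Y)\lesssim e_n(I_Kj_X)$ for every $n$.

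The third step is the submultiplicativity of dyadic entropy numbers, $e_{m+n-1}(uv)\le e_m(u)\,e_n(v)$, applied with $u=I_K$, $v=j_X$ and $m=n$:
\[
e_{2n-1}(I_Kj_X)\ \le\ e_n(I_K)\,e_n(j_X)\ \preceq\ n^{-1/r}(1+\log n)^{p}\cdot n^{-(1/2+1/a)}(\log n)^{b/a},
\]
so that $e_n(j_Y)\preceq n^{-(1/r+1/2+1/a)}(\log n)^{p+b/a}$. Finally I would run the link backwards for $Y$: setting $\lambda=1/r+1/2+1/a$ and $\nu=p+b/a$, the bound $e_n(j_Y)\preceq n^{-\lambda}(\log n)^{\nu}$ yields $-\log\mathbb P(\sup_{[0,1]}|Y|\le\varepsilon)\preceq\varepsilon^{-a_Y}(\log1/\varepsilon)^{b_Y}$ with $1/2+1/a_Y=\lambda$ and $b_Y/a_Y=\nu$. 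Solving, $1/a_Y=1/r+1/a=(a+r)/(ar)$, so $a_Y=ar/(a+r)$, and $b_Y=a_Y\nu=\tfrac{ar}{a+r}\bigl(p+\tfrac{b}{a}\bigr)=\tfrac{(ap+b)r}{a+r}$, which is precisely the claimed pair of exponents.

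The main obstacle is not this exponent bookkeeping but the link itself: the correspondence between small ball behaviour and metric entropy is a statement about regularly varying profiles with a logarithmic correction, and one has to verify that the one-sided ``$\preceq$'' estimates, \emph{together with their logarithmic factors}, are preserved under the Legendre-type duality underlying the Kuelbs--Li theorem; this is exactly the technical content supplied by \cite{LL99} once the extra regularity assumption of \cite{KuL93} is removed, and I would cite it rather than reprove it. A minor subtlety worth noting is that $I_K$ need not be injective on $H_{\mu_X}$, so $B_{\mu_Y}$ is the closure of $I_Kj_X(B_{H_{\mu_X}})$ and not that set itself; but passing to the closure inflates covering numbers only by an absolute constant and leaves the entropy-number exponents untouched.
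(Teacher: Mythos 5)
The paper itself offers no proof of this proposition: it is quoted verbatim from Li and Linde \cite[Theorem 6.1]{LL99}, so the only ``proof'' to compare against is the one in that cited source. Your sketch correctly reproduces that argument — transferring the small ball bound for $X$ into an entropy-number bound for $j_X$ (the direction that needed Li--Linde's removal of the Kuelbs--Li regularity assumption, which you rightly flag and cite), composing via $e_{2n-1}(I_K j_X)\le e_n(I_K)\,e_n(j_X)$, and running the link backwards for $Y$ — and the exponent bookkeeping $a_Y=ar/(a+r)$, $b_Y=(ap+b)r/(a+r)$ is correct, so it is essentially the same approach as the cited proof.
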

Next, we use Proposition \ref{prop thm6.1 in LL99} to prove the lower bound of small ball probability of FIGFBM $Y$, which is reformulated as the following Lemma.

\begin{lemma}\label{lem lower} Let $\gamma\in[0,1)$,  $\alpha\in (-1/2+\gamma/2, 1/2)$ and $\theta>0$.  Then there exists a constant  $\kappa_4 \in (0,\infty)$  such that for any $\varepsilon\in (0,1)$,
  \begin{align}\label{eq log varphi lower}
   \mathbb P\left(\sup_{t\in [0,1]}|Y(t)| \le \varepsilon \right)\ge \exp\left( - \kappa_{4}  \varepsilon^{-\frac{1}{\beta}} \right).
    \end{align}
\end{lemma}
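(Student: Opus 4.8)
The plan is to represent $Y$ in the form $Y(t)=\int_0^1 K(t,s)X(s)\,ds$ required by Proposition~\ref{prop thm6.1 in LL99}, with $X$ the GFBM of \eqref{eq X} and
\[
K(t,s)=\frac{1}{\Gamma(\theta)}(t-s)_+^{\theta-1}\1_{\{0\le s\le t\le 1\}},
\]
the kernel of the Riemann--Liouville fractional integration operator $I_K$ of order $\theta$, and then to read off the conclusion from \cite[Theorem 6.1]{LL99}. Before applying it I would check that $I_K$ maps $C[0,1]$ boundedly into itself: for $\theta\ge 1$ the kernel is bounded and continuous, so this is immediate, while for $\theta\in(0,1)$ the substitution $s=tr$ gives $(I_Kf)(t)=\frac{t^{\theta}}{\Gamma(\theta)}\int_0^1(1-r)^{\theta-1}f(tr)\,dr$, from which continuity of $I_Kf$ in $t$ follows by dominated convergence, together with $\|I_Kf\|_\infty\le(\theta\Gamma(\theta))^{-1}\|f\|_\infty$. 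Since $X$ has a.s.\ continuous sample paths, the structural hypotheses of Proposition~\ref{prop thm6.1 in LL99} are met.

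Next I would supply the two quantitative inputs. For the Gaussian process $X$ I would invoke the small-ball estimate for GFBM from \cite{WX2022a}, namely
\[
-\log\mathbb P\Big(\sup_{t\in[0,1]}|X(t)|\le\varepsilon\Big)\preceq\varepsilon^{-1/(\alpha+1/2)}\qquad\text{as }\varepsilon\to0,
\]
so that the first hypothesis of Proposition~\ref{prop thm6.1 in LL99} holds with $a=1/(\alpha+\tfrac12)>0$ (recall $\alpha+\tfrac12\in(\gamma/2,1)$) and $b=0$. I would stress that the correct exponent here is $1/(\alpha+\tfrac12)$, governed by the \emph{local} index $\alpha+\tfrac12=H+\gamma/2$ of GFBM at a generic point $t>0$, and \emph{not} $1/H$: near the origin $X$ is rougher but automatically of small amplitude, so that part costs only a bounded factor. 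For the operator $I_K$ I would use that its $n$-th dyadic entropy number satisfies $e_n(I_K)\preceq n^{-\theta}$, i.e.\ the second hypothesis holds with $r=1/\theta>0$ and $p=0$; this is because $I_K$ maps the unit ball of $C[0,1]$ into a bounded subset of the H\"older--Zygmund class of order $\theta$ (for $\theta\in(0,1]$ one checks directly that $I_Kf$ is $\theta$-H\"older with constant $\lesssim\|f\|_\infty$, and for $\theta>1$ one iterates via $(I_Kf)'=I_{\theta-1}f$), whose sup-norm metric entropy is of order $\varepsilon^{-1/\theta}$, equivalently $e_n\asymp n^{-\theta}$.

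With these parameters, since $\tfrac1a+\tfrac1r=(\alpha+\tfrac12)+\theta=\beta$ and $ap+b=0$, Proposition~\ref{prop thm6.1 in LL99} yields
\[
-\log\mathbb P\Big(\sup_{t\in[0,1]}|Y(t)|\le\varepsilon\Big)\preceq\varepsilon^{-ar/(a+r)}\Big(\log\tfrac1\varepsilon\Big)^{(ap+b)r/(a+r)}=\varepsilon^{-1/\beta}.
\]
Hence there are $\varepsilon_0\in(0,1)$ and a finite constant for which the left-hand side is $\le\kappa\,\varepsilon^{-1/\beta}$ on $(0,\varepsilon_0)$, while for $\varepsilon\in[\varepsilon_0,1)$ one has $\mathbb P(\sup_{t\in[0,1]}|Y(t)|\le\varepsilon)\ge\mathbb P(\sup_{t\in[0,1]}|Y(t)|\le\varepsilon_0)>0$; enlarging the constant gives \eqref{eq log varphi lower} for all $\varepsilon\in(0,1)$ with a suitable $\kappa_4$.

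The main obstacle is getting the two inputs exactly right: identifying the small-ball exponent of $X$ as $1/(\alpha+\tfrac12)$ rather than $1/H$ --- it is precisely this local exponent that makes the output exponent come out to $1/\beta$ --- and establishing $e_n(I_K)\preceq n^{-\theta}$ with no logarithmic correction and uniformly over all $\theta>0$. Both rest on citing the precise results of \cite{WX2022a} and on sharp entropy estimates for Riemann--Liouville operators (or the short self-contained argument indicated above); the remaining steps are bookkeeping inside Proposition~\ref{prop thm6.1 in LL99}.
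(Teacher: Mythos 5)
Your proposal is correct and follows essentially the same route as the paper: both feed the GFBM small-ball lower bound from \cite{WX2022a} (exponent $1/(\alpha+1/2)$, not $1/H$) and the entropy estimate $e_n(R_\theta)\preceq n^{-\theta}$ for the Riemann--Liouville operator into Proposition~\ref{prop thm6.1 in LL99}, obtaining the exponent $ar/(a+r)=1/\beta$ with no logarithmic factor. The only cosmetic difference is that the paper simply cites Proposition 6.1 of \cite{LL99} for the entropy bound, whereas you sketch a self-contained H\"older-class argument for it; your extra care about boundedness of $I_K$ on $C[0,1]$ and the trivial extension from small $\varepsilon$ to all $\varepsilon\in(0,1)$ is fine.
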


\begin{proof}
 Theorem 1.2 in \cite{WX2022a} says that there exists a  positive constant
   $\kappa_{5}$ such that for all $ 0<\varepsilon<1$,
 \begin{align}\label{eq small X}
  \mathbb P\left(\sup_{t\in [0,1]} |X(t)|\le  \varepsilon \right)\ge
  \exp\left( - \kappa_{5}  \varepsilon^{-\frac{1}{\alpha + 1/2}} \right).
 \end{align}
For any $\theta>0$,  the Riemann-Liouville integral operator $R_{\theta} : C([0,1],\|\cdot\|_{\infty}) \rightarrow  C([0,1],\|\cdot\|_{\infty})$ is defined by
$$
\left(R_{\theta} f\right)(t)=\frac{1}{\Gamma(\theta)} \int_{0}^{t}(t-s)^{\theta-1} f(s) d s, \quad 0 \leq t \leq 1. $$
Here, $C([0,1],\|\cdot\|_{\infty})$ denotes the space of continuous functions from $[0,1]$ to $\mathbb R$ with the sup-norm $\|\cdot\|_{\infty}$.  Obviously,  $Y(t)=\left(R_{\theta} X\right)(t)$
with the GFBM $X$ being defined as in \eqref{eq X}.

Set
$$
e_{n}\left(R_{\theta}: C([0,1]) \rightarrow  C([0,1]) \right)=\inf \left\{\varepsilon>0: N\left(R_{\theta}(C([0,1])), \varepsilon\right)\le 2^{n-1}\right\}.
$$
Here,  $N(R_{\theta}(C([0,1])), \varepsilon)$ is called  the minimum covering number which equals that:
$$\min \left\{n \geq 1: \exists x_{1}, \ldots, x_{n} \in R_{\theta}(C([0,1])) \text { s.t.} \bigcup_{j=1}^{n} \left\{x: \|x- x_{j}\|_{\infty}<\varepsilon\right\}  \supseteq R_{\theta}(C([0,1]))\right\}.$$
 By Proposition 6.1 in \cite{LL99}, there exists   a positive constant $c_{2,1}$ such that
\begin{align}\label{en}
    c_{2,1}^{-1} n^{-\theta}\le
    e_{n}\left(R_{\theta}:  C([0,1],\|\cdot\|_{\infty} )  \rightarrow  C([0,1]),\|\cdot\|_{\infty} \right) \le c_{2,1} n^{-\theta}.
\end{align}
Combining \eqref{eq small X} with \eqref{en}, and using Proposition \ref{prop thm6.1 in LL99}, we  obtain the inequality \eqref{eq log varphi lower}.

The proof is complete.
  \end{proof}

 \subsection{Small ball   functions}
Set
\begin{align}\label{eq varphi}
\psi(\varepsilon):=-{\log} \varphi (\varepsilon).
\end{align}
Then $\psi$ is positive and non-increasing. According to Borell \cite{2},  $\psi$ is convex which implies the
existence of the right derivative $\psi^{\prime}$ of $\psi$. Thus, $\psi^{\prime}\le 0$ and $|\psi^{\prime}|$ is non-increasing.

From the small ball probability estimates in \eqref{eq log varphi},
we can see that there exists a constant  $K_{1} \ge  1$ such that for all  $\varepsilon<1$,
\begin{equation}\label{eq K1}
\frac{1}{K_{1} \varepsilon^{1/\beta}} \le  \psi(\varepsilon) \le  \frac{K_{1}}{\varepsilon^{1/\beta}}.
\end{equation}

The following lemmas give more properties of the functions $\varphi$ and $\psi$,   which are similar to those in Talagrand \cite[Section 2]{Tal96}.
 Since  the following three lemmas   follow along the same lines in the  proof of the analogous lemmas for FBM in \cite[Section 2]{Tal96},  we omit the details here.

\begin{lemma}\label{pro 4.1}
There exists a  positive constant $K_{2}$ such that for all $\varepsilon\in (0, 1/K_{2})$,
\begin{equation*}
-\frac{K_{2}}{\varepsilon^{1+1 /\beta}}\le  \psi^{\prime}(\varepsilon) \le  -\frac{1}{K_{2} \varepsilon^{1+1 /\beta}}  .
\end{equation*}
\end{lemma}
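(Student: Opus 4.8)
The plan is to establish the bounds on $\psi'(\varepsilon)$ by exploiting the convexity of $\psi$ together with the two-sided estimate \eqref{eq K1}, following the standard argument of Talagrand \cite[Section 2]{Tal96}. The key observation is that for a convex non-increasing function, the right derivative at a point can be sandwiched between difference quotients over suitable intervals: for any $0<\varepsilon<\varepsilon'$ one has
\[
\frac{\psi(\varepsilon')-\psi(\varepsilon)}{\varepsilon'-\varepsilon}\le \psi'(\varepsilon')\le 0
\quad\text{and}\quad
\psi'(\varepsilon)\le \frac{\psi(\varepsilon')-\psi(\varepsilon)}{\varepsilon'-\varepsilon}.
\]
First I would derive the lower bound (in absolute value) on $|\psi'(\varepsilon)|$. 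Applying the first inequality with the pair $(\varepsilon/2,\varepsilon)$ gives $\psi'(\varepsilon)\le \dfrac{\psi(\varepsilon)-\psi(\varepsilon/2)}{\varepsilon/2}\le \dfrac{2}{\varepsilon}\bigl(\psi(\varepsilon)-\psi(\varepsilon/2)\bigr)$; since $\psi$ is non-increasing this is $\le 0$, so I instead estimate using $\psi'(\varepsilon)\le \psi'(\varepsilon/2)\le \dfrac{\psi(\varepsilon/2)-\psi(\varepsilon/4)}{\varepsilon/4}$... more directly, from convexity $\psi(\varepsilon/2)\ge \psi(\varepsilon)-(\varepsilon/2)\psi'(\varepsilon)$, hence $-\psi'(\varepsilon)\ge \dfrac{2}{\varepsilon}\bigl(\psi(\varepsilon/2)-\psi(\varepsilon)\bigr)\ge \dfrac{2}{\varepsilon}\Bigl(\dfrac{1}{K_1(\varepsilon/2)^{1/\beta}}-\dfrac{K_1}{\varepsilon^{1/\beta}}\Bigr)$. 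The term in brackets equals $\varepsilon^{-1/\beta}\bigl(2^{1/\beta}/K_1-K_1\bigr)$, which is a positive constant multiple of $\varepsilon^{-1/\beta}$ provided $2^{1/\beta}>K_1^2$; this need not hold for our $K_1$, so instead I would iterate: compare $\psi(\varepsilon)$ with $\psi(\lambda\varepsilon)$ for $\lambda\in(0,1)$ small enough that $\lambda^{-1/\beta}>K_1^2$, obtaining $-\psi'(\varepsilon)\ge \dfrac{1}{(1-\lambda)\varepsilon}\bigl(\psi(\lambda\varepsilon)-\psi(\varepsilon)\bigr)\ge \dfrac{c}{\varepsilon^{1+1/\beta}}$ for a suitable $c>0$ and all small $\varepsilon$.

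Next I would derive the upper bound on $|\psi'(\varepsilon)|$. Using the other side of the convexity inequality with the pair $(\varepsilon,2\varepsilon)$: $\psi(2\varepsilon)\ge \psi(\varepsilon)+\varepsilon\psi'(\varepsilon)$, so $-\psi'(\varepsilon)\le \dfrac{1}{\varepsilon}\bigl(\psi(\varepsilon)-\psi(2\varepsilon)\bigr)\le \dfrac{1}{\varepsilon}\psi(\varepsilon)\le \dfrac{K_1}{\varepsilon^{1+1/\beta}}$, where in the middle step I used $\psi(2\varepsilon)\ge 0$ and in the last step \eqref{eq K1}. Relabelling the constants, both bounds take the asserted form $\dfrac{1}{K_2\varepsilon^{1+1/\beta}}\le -\psi'(\varepsilon)\le \dfrac{K_2}{\varepsilon^{1+1/\beta}}$ on a suitable interval $(0,1/K_2)$.

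The main obstacle, as indicated above, is purely quantitative: the crude comparison of $\psi$ at $\varepsilon$ and $\varepsilon/2$ does not by itself yield a positive lower bound on $-\psi'(\varepsilon)$ because the constant $K_1$ in \eqref{eq K1} may be large, so one must compare the values of $\psi$ at two points separated by a fixed but sufficiently large ratio $\lambda^{-1}$ chosen so that the gap between $\psi(\lambda\varepsilon)\ge \tfrac{1}{K_1}(\lambda\varepsilon)^{-1/\beta}$ and $\psi(\varepsilon)\le K_1\varepsilon^{-1/\beta}$ is genuinely of order $\varepsilon^{-1/\beta}$. Apart from this bookkeeping the argument is routine; since it is identical to the corresponding step for FBM in \cite[Section 2]{Tal96}, the details are omitted as stated in the paragraph preceding the lemma.
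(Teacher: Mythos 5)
Your overall strategy (convexity from Borell plus the two-sided estimate \eqref{eq K1}, comparing values of $\psi$ at two points whose ratio is a fixed constant chosen large enough to beat $K_1^2$) is exactly the Talagrand-style argument the paper points to, and your opening display of chord-slope inequalities is correct. However, in both of your actual derivations you apply the convexity inequality on the wrong side of $\varepsilon$, and the resulting signs are reversed. From the subgradient inequality $\psi(x)\ge\psi(\varepsilon)+\psi'(\varepsilon)(x-\varepsilon)$ with $x=\lambda\varepsilon<\varepsilon$ one gets $\psi(\lambda\varepsilon)-\psi(\varepsilon)\ge(1-\lambda)\varepsilon\,(-\psi'(\varepsilon))$, i.e.\ $-\psi'(\varepsilon)\le\frac{\psi(\lambda\varepsilon)-\psi(\varepsilon)}{(1-\lambda)\varepsilon}$: a comparison with a point to the \emph{left} of $\varepsilon$ bounds $|\psi'(\varepsilon)|$ from \emph{above}, not from below as you claim (your step ``hence $-\psi'(\varepsilon)\ge\frac{2}{\varepsilon}(\psi(\varepsilon/2)-\psi(\varepsilon))$'' flips the inequality when solving for $-\psi'$, and the same flip occurs in your $\lambda$-version). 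Symmetrically, with $x=2\varepsilon$ (or $A\varepsilon$, $A>1$) one gets $-\psi'(\varepsilon)\ge\frac{\psi(\varepsilon)-\psi(A\varepsilon)}{(A-1)\varepsilon}$, so a comparison to the \emph{right} bounds $|\psi'(\varepsilon)|$ from \emph{below}; your claimed upper bound $-\psi'(\varepsilon)\le\frac{1}{\varepsilon}(\psi(\varepsilon)-\psi(2\varepsilon))$ is the reverse of what convexity gives and is false in general (test $\psi(\varepsilon)=\varepsilon^{-1/\beta}$). As written, neither bound of the lemma is established.

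The repair is to swap the two comparisons, keeping your (correct) quantitative bookkeeping. For the lower bound, take $A>1$ with $A^{1/\beta}>K_1^2$ and use $-\psi'(\varepsilon)\ge\frac{\psi(\varepsilon)-\psi(A\varepsilon)}{(A-1)\varepsilon}\ge\frac{1}{(A-1)\varepsilon^{1+1/\beta}}\bigl(\frac{1}{K_1}-K_1A^{-1/\beta}\bigr)$, valid for $A\varepsilon<1$; this is where the ``large fixed ratio'' issue you correctly identified belongs, and it explains the restriction $\varepsilon\in(0,1/K_2)$ in the statement. For the upper bound, use the left comparison: $-\psi'(\varepsilon)\le\frac{\psi(\varepsilon/2)-\psi(\varepsilon)}{\varepsilon/2}\le\frac{2\psi(\varepsilon/2)}{\varepsilon}\le\frac{2^{1+1/\beta}K_1}{\varepsilon^{1+1/\beta}}$ by \eqref{eq K1}. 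With these two corrections your proof coincides with the argument of Talagrand's Section 2 that the paper invokes (the paper itself omits the details for this lemma).
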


\begin{lemma}\label{cor4.1}
There exists a constant $K_{3}$  such that for all $\varepsilon \in (0, 2/K_2) $ and $\mu > \varepsilon/2 $,
\begin{equation}\label{6}
\exp \left(-K_{3} \frac{|\mu-\varepsilon|}{\varepsilon^{1+1/\beta}}\right) \le  \frac{\varphi\left(\mu\right)}{\varphi(\varepsilon)} \le  \exp \left(K_{3} \frac{|\mu-\varepsilon|}{\varepsilon^{1+1/\beta}}\right).
\end{equation}
\end{lemma}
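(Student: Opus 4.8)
The goal is to prove Lemma \ref{cor4.1}, namely the two-sided estimate
\[
\exp \left(-K_{3} \frac{|\mu-\varepsilon|}{\varepsilon^{1+1/\beta}}\right) \le  \frac{\varphi(\mu)}{\varphi(\varepsilon)} \le  \exp \left(K_{3} \frac{|\mu-\varepsilon|}{\varepsilon^{1+1/\beta}}\right)
\]
for $\varepsilon\in(0,2/K_2)$ and $\mu>\varepsilon/2$. The plan is to deduce this from the derivative bounds for $\psi$ in Lemma \ref{pro 4.1} by integrating. Writing $\log\bigl(\varphi(\mu)/\varphi(\varepsilon)\bigr)=\psi(\varepsilon)-\psi(\mu)=\int_{\mu}^{\varepsilon}\psi'(s)\,ds$, the whole argument reduces to controlling $\int_{\min(\mu,\varepsilon)}^{\max(\mu,\varepsilon)}|\psi'(s)|\,ds$ from above by a constant multiple of $|\mu-\varepsilon|/\varepsilon^{1+1/\beta}$. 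Since $|\psi'|$ is non-increasing (stated right after \eqref{eq varphi}), the crude bound $|\psi'(s)|\le|\psi'(\min(\mu,\varepsilon)/?)|$ must be used with care depending on the sign of $\mu-\varepsilon$.

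I would split into the two cases $\mu\ge\varepsilon$ and $\varepsilon/2<\mu<\varepsilon$. In the first case, for $s\in[\varepsilon,\mu]$ we have $|\psi'(s)|\le|\psi'(\varepsilon)|\le K_2/\varepsilon^{1+1/\beta}$ by Lemma \ref{pro 4.1} (which applies since $\varepsilon<2/K_2$, hence in particular $\varepsilon<2/K_2$ and — after possibly shrinking — in the range where Lemma \ref{pro 4.1} is valid; one absorbs the threshold constant into $K_3$). Therefore $|\psi(\varepsilon)-\psi(\mu)|=\int_\varepsilon^\mu|\psi'(s)|\,ds\le (K_2/\varepsilon^{1+1/\beta})(\mu-\varepsilon)$, which gives both inequalities with $K_3=K_2$. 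In the second case $\varepsilon/2<\mu<\varepsilon$, for $s\in[\mu,\varepsilon]$ we bound $|\psi'(s)|\le|\psi'(\mu)|\le K_2/\mu^{1+1/\beta}\le K_2 2^{1+1/\beta}/\varepsilon^{1+1/\beta}$, using $\mu>\varepsilon/2$; again Lemma \ref{pro 4.1} applies to $\mu$ since $\mu<\varepsilon<2/K_2$. Hence $|\psi(\varepsilon)-\psi(\mu)|\le 2^{1+1/\beta}K_2|\mu-\varepsilon|/\varepsilon^{1+1/\beta}$, and taking $K_3:=2^{1+1/\beta}K_2$ covers both cases simultaneously. Exponentiating the inequality $-|\psi(\varepsilon)-\psi(\mu)|\le \psi(\varepsilon)-\psi(\mu)=\log(\varphi(\mu)/\varphi(\varepsilon))\le|\psi(\varepsilon)-\psi(\mu)|$ yields the claim.

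The only real subtlety is the bookkeeping of the domains of validity: Lemma \ref{pro 4.1} gives the derivative bound only for arguments below $1/K_2$, and here the relevant arguments are $\varepsilon$ and $\mu$, which in the case $\mu<\varepsilon$ are both below $\varepsilon<2/K_2$ — but one wants them below $1/K_2$. This is handled by shrinking the admissible range of $\varepsilon$ (replacing $2/K_2$ by $1/K_2$, or equivalently stating the hypothesis as $\varepsilon\in(0,2/K_2)$ and noting Lemma \ref{pro 4.1} is applied after confirming the argument lies in $(0,1/K_2)$; the factor $2$ in the statement is the harmless slack) and, in the case $\mu\ge\varepsilon$, by noting that even if $\mu$ is large the bound $|\psi'(s)|\le|\psi'(\varepsilon)|$ for $s\ge\varepsilon$ requires only that $\varepsilon$ — not $\mu$ — lie in the good range, since $|\psi'|$ is globally non-increasing on $(0,\infty)$. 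Since this lemma follows exactly the corresponding argument in Talagrand \cite[Section 2]{Tal96}, I would state these points briefly and refer there for the remaining routine details.
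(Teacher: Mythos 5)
Your argument is correct and is exactly the Talagrand-style proof the paper has in mind (it omits the details, referring to Section 2 of \cite{Tal96}): write $\log\bigl(\varphi(\mu)/\varphi(\varepsilon)\bigr)=\psi(\varepsilon)-\psi(\mu)$ and integrate the derivative bound of Lemma \ref{pro 4.1} using that $|\psi'|$ is non-increasing, splitting into $\mu\ge\varepsilon$ and $\varepsilon/2<\mu<\varepsilon$, with $K_3=2^{1+1/\beta}K_2$. For the leftover range $\varepsilon\in[1/K_2,2/K_2)$ you need not shrink the statement: there $\mu,\varepsilon\ge 1/(2K_2)$, so $|\psi'(s)|\le|\psi'(1/(2K_2))|<\infty$ on the relevant interval while $\varepsilon^{1+1/\beta}$ stays bounded above, and enlarging $K_3$ covers this case as well.
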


\begin{lemma}\label{lem increase}
For all $\varepsilon< (\beta/K_2)^{\beta}$, the function $\varepsilon^{-1/\beta}\varphi(\varepsilon)$ is increasing.
\end{lemma}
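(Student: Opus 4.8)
The plan is to reduce the asserted monotonicity of $\varepsilon\mapsto \varepsilon^{-1/\beta}\varphi(\varepsilon)$ to a pointwise lower bound on $|\psi'|$ and then quote Lemma~\ref{pro 4.1}. Writing
\[
g(\varepsilon):=\varepsilon^{-1/\beta}\varphi(\varepsilon)=\exp\!\left(-\tfrac1\beta\log\varepsilon-\psi(\varepsilon)\right),\qquad \varepsilon\in(0,e^e],
\]
it suffices to show that the exponent $h(\varepsilon):=-\tfrac1\beta\log\varepsilon-\psi(\varepsilon)$ is strictly increasing on the interval in question, since $g=e^{h}$. Recall that $\psi=-\log\varphi$ is convex, hence locally Lipschitz on $(0,e^e)$, differentiable off a countable set, and satisfies $\psi(\varepsilon_2)-\psi(\varepsilon_1)=\int_{\varepsilon_1}^{\varepsilon_2}\psi'(t)\,dt$ for $0<\varepsilon_1<\varepsilon_2$, where $\psi'$ denotes the (non-positive, non-decreasing) right derivative. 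Consequently $h$ is absolutely continuous on compact subintervals of $(0,e^e)$ and
\[
h(\varepsilon_2)-h(\varepsilon_1)=\int_{\varepsilon_1}^{\varepsilon_2}\left(-\frac{1}{\beta t}-\psi'(t)\right)dt
=\int_{\varepsilon_1}^{\varepsilon_2}\left(|\psi'(t)|-\frac{1}{\beta t}\right)dt .
\]

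The key step is to check that the integrand is positive on the relevant range. By Lemma~\ref{pro 4.1}, for $t\in(0,1/K_2)$ one has $|\psi'(t)|=-\psi'(t)\ge \tfrac{1}{K_2 t^{1+1/\beta}}$, so
\[
|\psi'(t)|-\frac{1}{\beta t}\ \ge\ \frac{1}{K_2 t^{1+1/\beta}}-\frac{1}{\beta t}
=\frac{1}{\beta t}\left(\frac{\beta}{K_2\,t^{1/\beta}}-1\right)\ >\ 0
\]
precisely when $t^{1/\beta}<\beta/K_2$, i.e.\ $t<(\beta/K_2)^{\beta}$. Hence, for all $\varepsilon_1<\varepsilon_2<(\beta/K_2)^{\beta}$ the displayed integral is strictly positive, so $h$ — and therefore $g$ — is strictly increasing on $\bigl(0,(\beta/K_2)^{\beta}\bigr)$, which is the claim (the stated monotonicity being a weaker assertion).

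The only point requiring a little care is the bookkeeping of thresholds: the estimate above uses Lemma~\ref{pro 4.1}, which is available only on $(0,1/K_2)$, so one should make sure the condition $t<(\beta/K_2)^{\beta}$ already forces $t$ into $(0,1/K_2)$. This can always be arranged by enlarging $K_2$ if necessary (enlarging $K_2$ only weakens the bounds in Lemma~\ref{pro 4.1}), or, equivalently, one may state the conclusion on $\bigl(0,\min\{1/K_2,(\beta/K_2)^{\beta}\}\bigr)$. Everything else is the one-line computation above, and the argument is exactly the analogue of the corresponding step in Talagrand \cite[Section~2]{Tal96}.
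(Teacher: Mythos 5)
Your proof is correct and is essentially the argument the paper defers to (Talagrand's): write $\log\bigl(\varepsilon^{-1/\beta}\varphi(\varepsilon)\bigr)=-\tfrac1\beta\log\varepsilon-\psi(\varepsilon)$ and use the lower bound on $|\psi'|$ from Lemma \ref{pro 4.1} to see that the (right) derivative is positive precisely when $\varepsilon<(\beta/K_2)^{\beta}$, with convexity of $\psi$ justifying the integral representation. The only caveat, which you already flag, is the overlap of thresholds; note that for $\beta<1$ enlarging $K_2$ does \emph{not} force $(\beta/K_2)^{\beta}\le 1/K_2$, so the clean fix is your second option, i.e.\ stating the monotonicity on $\bigl(0,\min\{1/K_2,(\beta/K_2)^{\beta}\}\bigr)$ (or simply for all sufficiently small $\varepsilon$, which is all that is used later).
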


\section{Proofs of Theorems  \ref{Thm-LowerClassesZero} and \ref{Thm-LowerClassesInf}}
\label{Sec Proofs}
By using the  properties of the small ball functions  in Lemmas \ref{pro 4.1}-\ref{lem increase} and by using the following two maximal inequalities in Propositions \ref{lem-MtMu} and \ref{Prop-MtMu1}  below, the proofs of Theorems  \ref{Thm-LowerClassesZero} and \ref{Thm-LowerClassesInf} follow the same lines  in   \cite[Sections 3 and 4]{WX2022b} except  with $H$  symbol   replaced by  $H+\theta$,   To avoid the repetition with \cite{WX2022b}, we only give the full proofs for Propositions \ref{lem-MtMu} and \ref{Prop-MtMu1}.
\subsection{One maximal inequality}
\begin{proposition}\label{lem-MtMu}
If $\gamma\in[0,1)$,  $\alpha\in(-1/2+\gamma/2,1/2)$ and $\theta>0$, then for all $0<t<u$ and  $\nu$, $\eta>0$, there exists a positive constant $c_{3,1}$ such that
\begin{equation}\label{eq 41}
\mathbb{P}\left(\left\{M(t) \leq \nu t^{H+\theta}\right\} \cap\{M(u) \leq \eta\}\right)
\leq  2 \varphi (\nu)\exp \left(-\frac{u-t}{c_{3,1} u^\frac{\gamma}{2\beta}\eta^{\frac{1}{\beta}}}\right).
\end{equation}
\end{proposition}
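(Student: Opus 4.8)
The plan is to reduce the estimate of the probability $\mathbb{P}\big(\{M(t)\le \nu t^{H+\theta}\}\cap\{M(u)\le\eta\}\big)$ to two essentially independent pieces: the event $\{M(t)\le\nu t^{H+\theta}\}$, whose probability is exactly $\varphi(\nu)$ by the scaling identity \eqref{def varphi}, and the event that $Y$ stays small on the whole interval $[0,u]$, which should contribute the exponential factor via the small ball behaviour of $Y$ over an interval of length $u-t$. The key structural input is the decomposition of $Y_2$ obtained in Section \ref{Proof of Thm-SmallBall}: writing
$$Y(t)=Y_1(t)+Y_2(t),\qquad Y_2(t+h)=\frac{\Gamma(\alpha+1)}{\Gamma(\alpha+\theta+1)}\int_0^{t+h}(t+h-w)^{\alpha+\theta}w^{-\gamma/2}B(dw),$$
and the observation that $Y_1$ is independent of the increments of $B$ on $(0,\infty)$. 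By the Anderson inequality, conditioning on $Y_1$ only enlarges the small ball probability, so it suffices to bound the probability of $\{M_2(t)\le\nu t^{H+\theta}\}\cap\{M_2(u)\le\eta\}$ where $M_2(s):=\sup_{0\le r\le s}|Y_2(r)|$; and then, dropping the constraint on $[t,u]$ coming from $\{M_2(u)\le\eta\}$, conditioning on $\sigma(B(s):0\le s\le t)\supseteq\sigma(Y_2(s):0\le s\le t)$.

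The core of the argument is therefore the following conditional estimate. First I would discretize $[t,u]$: choose points $t=:s_0<s_1<\cdots<s_N$ with spacing of order $u-t$ over $N$ a suitable power of $(u-t)/(\eta^{1/\beta}u^{\gamma/(2\beta)})$ — more precisely I would take $N=\lceil c\,(u-t)\,u^{-\gamma/(2\beta)}\eta^{-1/\beta}\rceil$ for an appropriate constant $c$, and work with the first $N$ such points that still lie in $[t,u]$. Using the conditional-variance lower bound from \eqref{eq SLND Y2}, namely
$$\operatorname{Var}\big(Y_2(s_{j+1})\mid B(r):0\le r\le s_j\big)\ge \frac{\Gamma^2(\alpha+1)}{2\beta\Gamma^2(\alpha+\theta+1)}\,(s_{j+1})^{-\gamma}(s_{j+1}-s_j)^{2\beta}\ge \frac{\Gamma^2(\alpha+1)}{2\beta\Gamma^2(\alpha+\theta+1)}\,u^{-\gamma}(s_{j+1}-s_j)^{2\beta},$$
and the fact that conditional laws of Gaussian processes are Gaussian, each conditional event $\{|Y_2(s_{j+1})|\le\eta\}$ given the past has probability at most $\Phi(c'\eta\, u^{\gamma/(2\beta)}(s_{j+1}-s_j)^{-1})-\Phi(-c'\eta\,u^{\gamma/(2\beta)}(s_{j+1}-s_j)^{-1})$, which with the above choice of spacing is bounded by a constant strictly less than $1$. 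Multiplying over $j=0,\dots,N-1$ by repeated conditioning gives a factor $\exp(-c''N)\le\exp\!\big(-(u-t)/(c_{3,1}u^{\gamma/(2\beta)}\eta^{1/\beta})\big)$. Crucially, this repeated conditioning is performed \emph{on top of} the event $\{M_2(t)\le\nu t^{H+\theta}\}$, which is measurable with respect to $\sigma(B(r):0\le r\le t)$; hence
$$\mathbb{P}\big(\{M_2(t)\le\nu t^{H+\theta}\}\cap\{M_2(u)\le\eta\}\big)\le \mathbb{P}\big(M_2(t)\le\nu t^{H+\theta}\big)\,\exp\!\Big(-\frac{u-t}{c_{3,1}u^{\gamma/(2\beta)}\eta^{1/\beta}}\Big).$$
Finally, applying Anderson's inequality once more in the other direction, $\mathbb{P}(M_2(t)\le\nu t^{H+\theta})\le\mathbb{P}(M(t)\le\nu t^{H+\theta})=\varphi(\nu)$ by \eqref{def varphi}; the extra factor $2$ in \eqref{eq 41} absorbs the rounding in the choice of $N$ (the case when $\lfloor(u-t)\cdot(\cdots)\rfloor$ forces $N$ to be small, in which $\exp(-c''N)$ may need to be replaced by the trivial bound $1$ times an adjusted constant).

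The main obstacle I anticipate is the bookkeeping of the conditioning order: one must make sure that the discretization points $s_1,\dots,s_N$ all lie in $(t,u]$ and that the conditional variance bound \eqref{eq SLND Y2} is applied with the conditioning $\sigma$-field nested correctly — i.e. that at each step the "new" increment of $B$ on $(s_j,s_{j+1})$ is independent of everything used before, including the event on $[0,t]$. Since each $Y_2(s_{j+1})$ is built from $B$ on $[0,s_{j+1}]$ and the relevant conditional variance isolates the contribution of $B$ on $[s_j,s_{j+1}]$, this works, but care is needed so that the constant $c_{3,1}$ does not secretly depend on $t$ or $u$; the uniformity comes precisely from the lower bound $(s_{j+1})^{-\gamma}\ge u^{-\gamma}$ and the scaling normalization in $\varphi(\nu)$, which is why the factor $u^{\gamma/(2\beta)}$ rather than $t^{\gamma/(2\beta)}$ appears in the exponent. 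A secondary point to handle cleanly is the degenerate regime where $\eta$ is large relative to $u-t$: there $N=1$ and the exponential factor is harmless, which is exactly what the constant $2$ and a large enough $c_{3,1}$ accommodate.
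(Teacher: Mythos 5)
Your overall strategy (chaining over a grid of spacing $\asymp u^{\gamma/(2\beta)}\eta^{1/\beta}$ in $[t,u]$, using the conditional-variance bound \eqref{eq SLND Y2} and conditional Anderson/unimodality to gain a fixed factor $\rho<1$ per step) is exactly the engine of the paper's proof, and that part of your plan is sound. The genuine gap is your very last step: having reduced the whole problem to $Y_2$ at the outset via Anderson's inequality, you then claim ``applying Anderson's inequality once more in the other direction, $\mathbb{P}(M_2(t)\le\nu t^{H+\theta})\le\mathbb{P}(M(t)\le\nu t^{H+\theta})=\varphi(\nu)$.'' Anderson's inequality has no such other direction: for the symmetric convex event $\{\sup_{0\le s\le t}|\cdot|\le\nu t^{H+\theta}\}$ and the independent decomposition $Y=Y_1+Y_2$ it gives precisely $\varphi(\nu)=\mathbb{P}(M(t)\le\nu t^{H+\theta})\le\mathbb{P}(M_2(t)\le\nu t^{H+\theta})$, i.e.\ the opposite inequality. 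So your argument only yields the bound with $\varphi(\nu)$ replaced by the (generally larger) small ball function of $Y_2$, which is not the stated estimate \eqref{eq 41} and is not what the integral tests downstream require.

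The fix — and the route the paper takes — is not to discard $Y_1$ globally. Keep the event for the full process: since $G_0=\{M(t)\le\nu t^{H+\theta}\}\cap\{M(t)\le\eta\}\subseteq\{M(t)\le\nu t^{H+\theta}\}$, the factor $\varphi(\nu)$ comes for free from the base case of the induction, with no comparison between $Y$ and $Y_2$ needed. Anderson's inequality is then used only \emph{conditionally} at each chaining step: the paper writes the increment $Y(t_{k+1})-Y(t_k)=I_1+I_2$, where $I_1$ is driven solely by the noise on $(t_k,t_{k+1}]$ (so it is independent of $\sigma\{B(s):s\le t_k\}$ and has variance $\ge c\,\eta^2$ by the choice of grid), while $I_2$ — which absorbs both the negative-time part of the noise and $B$ on $[0,t_k]$ — is past-measurable; then $\mathbb{P}(|I_1+I_2|\le2\eta\mid\sigma\{B(s):s\le t_k\})\le\mathbb{P}(|I_1|\le2\eta)=:\rho<1$. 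Your conditioning bookkeeping (nested $\sigma$-fields, all grid points in $(t,u]$, uniformity via $s^{-\gamma}\ge u^{-\gamma}$, the trivial regime when $(u-t)/(u^{\gamma/(2\beta)}\eta^{1/\beta})$ is bounded) is fine and matches the paper; also note a small slip in your per-step bound, which should read $\Phi\bigl(c'\eta\,u^{\gamma/2}(s_{j+1}-s_j)^{-\beta}\bigr)-\Phi\bigl(-c'\eta\,u^{\gamma/2}(s_{j+1}-s_j)^{-\beta}\bigr)$ (with your spacing this is indeed a constant $<1$), but that is cosmetic compared with the misuse of Anderson at the end.
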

\begin{proof}
When $(u-t) /\left(u^{\frac{\gamma}{2\beta}} \eta^{\frac{1}{\beta}}\right) \le  2$, it is obvious that \eqref{eq 41} holds with $c_{3,1}=2/\ln2$. Thus,  it is sufficient to  prove \eqref{eq 41} in  the case of  $(u-t) /\left(u^{\frac{\gamma}{2\beta}} \eta^{\frac{1}{\beta}}\right)>2$.  The proof is divided into two steps.

\noindent {\bf Step 1.} We define an increasing sequence  $\left\{t_{k}\right\}_{k \ge  0}$ as follows. Set  $t_{0}=t$. For any $k\ge0$, if $t_{k}$ has been defined, then we choose $t_{k+1}$ such that
\begin{equation}\label{eq tk}
  t_{k+1}-t_{k+1}^{\frac{\gamma}{2\beta}} \eta^{\frac{1}{\beta}}=t_{k}.
 \end{equation}
 Consider the event
\begin{equation}\label{G}
G_{k}:= \left\{M(t) \leq \nu t^{H+\theta}\right\} \bigcap\left\{ M(t_k)  \leq \eta\right\}.
\end{equation}
 To prove \eqref{eq 41}, it suffices to prove that for any $k\ge1$,  it holds that
 \begin{align}\label{eq Gk}
 \mathbb P(G_k)\le \varphi(\nu)\rho^k,
 \end{align}
 where $\rho \in  (0, 1)$ is a constant that depends on $\alpha, \gamma$ and $\theta$ only. Indeed, if
  $k_{0}$  is the largest integer such that  $t_{k_{0}} \le  u$, then $t_{k_{0}+1}>u$.  Consequently, by \eqref{eq tk}, we  have
\begin{equation*}
t_{k_{0}}=t_{k_{0}+1}-t_{k_{0}+1}^{\frac{\gamma}{2\beta}} \eta^{\frac{1}{\beta}}>u-u^{\frac{\gamma}{2\beta}} \eta^{\frac{1}{\beta}}.
\end{equation*}
This, together with fact that $t_{k+1}-t_{k}=t_{k+1}^{\frac{\gamma}{2\beta}} \eta^{\frac{1}{\beta}} \le  u^{\frac{\gamma}{2\beta}} \eta^{\frac{1}{\beta}}$ for any $k<  k_{0}$, implies that
\begin{equation*}
k_{0}+1 \ge  \frac{t_{k_{0}}-t}{u^{\frac{\gamma}{2\beta}} \eta^{\frac{1}{\beta}}}+1=\frac{t_{k_{0}}+u^{\frac{\gamma}{2\beta}} \eta^{\frac{1}{\beta}}-t}{u^{\frac{\gamma}{2\beta}} \eta^{\frac{1}{\beta}}}>\frac{u-t}{u^{\frac{\gamma}{2\beta}} \eta^{\frac{1}{\beta}}}.
\end{equation*}
Hence, we know that   $k_{0}>(u-t) /\left(2 u^{\frac{\gamma}{2\beta}} \eta^{\frac{1}{\beta}}\right)$  and
\begin{equation*}
 G_{k_{0}} \supseteq\left\{M(t) \leqslant \nu t^{H+\theta}\right\} \cap\{M(u) \leqslant \eta\}.
\end{equation*}
Consequently, \eqref{eq Gk} implies \eqref{eq 41}.

\noindent {\bf Step 2.}
We prove \eqref{eq Gk} by induction over  $k$. The result holds for  $k=0$  by \eqref{def varphi}. For the induction step, we observe that
$$G_{k+1} \subseteq G_{k} \cap\{|I| \leqslant 2 \eta\},$$
where
\begin{equation*}
\begin{split}
I :=&\, Y\left(t_{k+1}\right)-Y\left(t_{k}\right) \\
=&\,\frac{1}{\Gamma(\theta)} \int_{0}^{t_{k+1}}\left(t_{k+1}-w\right)^{\theta-1} X(w) dw-\frac{1}{\Gamma(\theta)} \int_{0}^{t_{k}}\left(t_{k}-w\right)^{\theta-1} X(w) dw \\
=&\, \frac{1}{\Gamma(\theta)} \int_{t_{k}}^{t_{k+1}}\left(t_{k+1}-w\right)^{\theta-1} X(w) d w\\
&+\frac{1}{\Gamma(\theta)} \int_{0}^{t_{k}}\left(\left(t_{k+1}-w\right)^{\theta-1}-\left(t_{k}-w\right)^{\theta-1}\right) X(w) d w. \\
\end{split}
\end{equation*}
Recall $X(w)$ defined in \eqref{eq X}. The integral $I$ can be rewritten as $I=I_{1}+I_{2}$, where
\begin{equation*}
\begin{split}
I_{1}:=&\, \frac{1}{\Gamma(\theta)} \int_{t_{k}}^{t_{k+1}}\left(t_{k+1}-w\right)^{\theta-1}d w  \int_{t_{k}}^{w}B(d x)(w-x)^{\alpha} x^{-\frac{\gamma}{2}}\\
I_{2}:= &\frac{1}{\Gamma(\theta)} \int_{t_{k}}^{t_{k+1}}\left(t_{k+1}-w\right)^{\theta-1}d w \int_{-\infty}^{t_k}B(d x)\left((w-x)^{\alpha}-(-x)_+^\alpha\right) |x|^{-\frac{\gamma}{2}} \\
&+\frac{1}{\Gamma(\theta)} \int_{0}^{t_{k}}\left(\left(t_{k+1}-w\right)^{\theta-1}-\left(t_{k}-w\right)^{\theta-1}\right) X(w) d w. \\
\end{split}
\end{equation*}
By stochastic Fubini theorem, we have
\begin{equation*}
\begin{split}
I_{1} & =\frac{1}{\Gamma(\theta)} \int_{t_{k}}^{t_{k+1}} x^{-\frac{\gamma}{2}}B(dx)  \int_{x}^{t_{k+1}}dw\left(t_{k+1}-w\right)^{\theta-1}(w-x)^{\alpha}\\
&=\frac{1}{\Gamma(\theta)} \int_{t_{k}}^{t_{k+1}} x^{-\frac{\gamma}{2}}  \left(t_{k+1}-x\right)^{\alpha+\theta}B(dx) \int_{0}^{1} dy  (1-y)^{\theta-1} y^{\alpha}\\
&=\frac{\Gamma(\alpha+1)}{\Gamma(\alpha+\theta+1)} \int_{t_{k}}^{t_{k+1}}\left(t_{k+1}-x\right)^{\alpha+\theta} x^{-\frac{\gamma}{2}} B(dx),
\end{split}
\end{equation*}
where the change of variable $y=(w-x)/(t_{k+1}-x)$ has been used in the second step.
The integral representation of $I_{1}$ implies that
$\mathbb{E}\left(I_{1}\right)=0$  and
\begin{equation*}
\begin{split}
\operatorname{Var}(I_{1})&
=\frac{\Gamma^{2}(\alpha+1)}{\Gamma^{2}(\alpha+\theta+1)} \int_{t_{k}}^{t_{k+1}}\left(t_{k+1}-x\right)^{2(\alpha+\theta)} x^{-\gamma} dx\\
&\ge  \frac{\Gamma^{2}(\alpha+1)}{2\beta\Gamma^{2}(\alpha+\theta+1)}t_{k+1}^{-\gamma}\left(t_{k+1}-t_{k}\right)^{2\beta} \\
&=\frac{\Gamma^{2}(\alpha+1)}{2\beta\Gamma^{2}(\alpha+\theta+1)}\eta^{2}.
\end{split}
\end{equation*}
This implies that
\begin{equation*}
\begin{split}
\mathbb P\left(\left|I_{1}\right| \le  2 \eta\right)\le&\,   \Phi\left(\frac{2 \sqrt{2 \beta} \Gamma(\alpha+\theta+1)}{\Gamma(\alpha+1)}\right)-\Phi\left(-\frac{2 \sqrt{2 \beta} \Gamma(\alpha+\theta+1)}{\Gamma(\alpha+1)}\right)
=:  \rho,
\end{split}
\end{equation*}
where  $\Phi $  denotes the distribution function of  a standard Gaussian random variable.
Consequently, by   the  independence of $I_{1}$  and $ \sigma\left\{B(s) ; s \le  t_{k}\right\}$,  the fact of $I_2\in   \sigma\left\{B(s) ; s \le  t_{k}\right\}$ and by Anderson's inequality  \cite{And1955},
we have
\begin{equation*}
\begin{split}
\mathbb{P}\left(G_{k+1}\right)   \le &\,  \mathbb{E}\Big[\mathbb{P}\left(G_{k} \cap\left\{\left|I_{1}+I_{2}\right| \le  2 \eta\right\} \mid \sigma\left\{B(s) ; s \le  t_{k}\right\}\right)\Big] \\
  = &\, \mathbb{E}\Big[\mathbb{I}_{G_{k}} \cdot \mathbb{P}\left(\left|I_{1}+I_{2}\right| \le  2 \eta \mid \sigma\left\{B(s) ; s \le  t_{k}\right\}\right)\Big] \\
   \le &\,  \mathbb{P}\left(G_{k}\right) \cdot \mathbb{P}\left(\left|I_{1}\right| \le  2 \eta\right) \\
  \le &\,  \mathbb{P}\left(G_{k}\right) \rho.
\end{split}
\end{equation*}
 By induction,  we obtain  \eqref{eq Gk}  with $\rho=\Phi\left(\frac{2 \sqrt{2 \beta} \Gamma(\alpha+\theta+1)}{\Gamma(\alpha+1)}\right)-\Phi\left(-\frac{2 \sqrt{2 \beta} \Gamma(\alpha+\theta+1)}{\Gamma(\alpha+1)}\right)$.
  The proof is complete.
\end{proof}

\subsection{Another maximal inequality}

\begin{proposition}\label{Prop-MtMu1}
Assume $\gamma\in [0,1),   \alpha\in \left(-\frac12+\frac{\gamma}{2}, \,  \frac12+\frac{\gamma}{2} \right)$ and $\theta>0$.  Let   $\tau\in (0, \frac{1-H}{3}] \cap (0, \frac{1-\gamma}{12}]\cap (0, \frac{H}{6}]$. Then, for any  $u>t>0$ and $\nu,\eta \in (0, 2/K_2)$, we have
\begin{equation}\label{4.10}
\begin{split}
&\mathbb{P}\left(M(t) \leq \nu t^{H+\theta}, M(u) \leq \eta u^{H+\theta}\right)\\
 \leq &\,\,  \varphi (\nu) \varphi (\eta) \exp \left(c_{3,2}\left(\frac{t}{u}\right)^{\tau}\left(\frac{1}{\nu^{1+1/\beta}}+\frac{1}{\eta^{1+1 /\beta}}\right)\right)
 +{c_{3,3}} \exp \left(-c_{3,4}\left(\frac{u}{t}\right)^{\tau}\right).
\end{split}
\end{equation}
Here, the constants $c_{3,2}, c_{3,3}, c_{3,4}$ do not  depend   on $u, t, \nu$ and $\eta$.
\end{proposition}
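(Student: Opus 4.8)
The plan is to adapt the multi-scale argument behind the analogous two-time maximal inequality for the GFBM $X$ in \cite[\S4]{WX2022b}, replacing the self-similarity index $H$ by $H+\theta$ throughout and replacing the GFBM kernel by the Riemann--Liouville-type kernel $x^{-\gamma/2}(s-x)^{\alpha+\theta}$ produced by the stochastic Fubini computation already used in the proof of Proposition \ref{lem-MtMu}. First I would dispose of the trivial regime: if $u/t$ is bounded above by a fixed constant, then, after choosing $c_{3,3}$ large and $c_{3,4}$ small, $c_{3,3}\exp(-c_{3,4}(u/t)^\tau)\ge 1\ge \mathbb{P}(M(t)\le\nu t^{H+\theta},\,M(u)\le\eta u^{H+\theta})$, so \eqref{4.10} holds. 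Hence I assume $u/t$ is as large as needed in terms of $\alpha,\gamma,\theta$ and $\tau$.

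Next I would introduce an intermediate time $v\in(t,u)$ placed so that $v/t$ and $u/v$ are comparable powers of $u/t$; the three hypotheses $\tau\le(1-H)/3$, $\tau\le(1-\gamma)/12$, $\tau\le H/6$ enter exactly here, and are what make the several error exponents produced below come out as powers of $u/t$ with matching sign and comparable order. Writing $\mathcal{F}_v=\sigma(B(r):r\le v)$ and using $Y=R_\theta X$ together with stochastic Fubini, for $s\ge v$ one has the orthogonal decomposition
\begin{align*}
 Y(s)&=\mathbb{E}[Y(s)\mid\mathcal{F}_v]+\widehat{Y}_v(s),\\
 \widehat{Y}_v(s)&:=\frac{\Gamma(\alpha+1)}{\Gamma(\alpha+\theta+1)}\int_v^s (s-x)^{\alpha+\theta}x^{-\gamma/2}B(dx),
\end{align*}
in which the fresh part $\widehat{Y}_v$ is independent of $\mathcal{F}_v$ and, exactly as in \eqref{eq SLND Y2}, obeys the strong local nondeterminism bound $\operatorname{Var}(\widehat{Y}_v(s)\mid \widehat{Y}_v(r):r\le s')\gtrsim s^{-\gamma}(s-s')^{2\beta}$ for $v\le s'\le s\le u$, while $\mathbb{E}[Y(\cdot)\mid\mathcal{F}_v]$ is $\mathcal{F}_v$-measurable and depends smoothly on $s$ on $(v,u]$, the singularity $s=x$ of the kernel having been excluded.

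The heart of the proof is then the following. Since $\{M(t)\le\nu t^{H+\theta}\}\in\mathcal{F}_t\subseteq\mathcal{F}_v$, one conditions on $\mathcal{F}_v$ and uses Anderson's inequality \cite{And1955} to discard the conditional mean $\mathbb{E}[Y(\cdot)\mid\mathcal{F}_v]$ on a suitable late window inside $(v,u]$; this reduces \eqref{4.10} to estimating the joint probability of the $\mathcal{F}_v$-event $\{M(t)\le\nu t^{H+\theta}\}$ and of a small-ball event for the $\mathcal{F}_v$-independent process $\widehat{Y}_v$. The strong local nondeterminism bound, the self-similarity of $Y$ with index $H+\theta$, and the small-ball regularity Lemmas \ref{pro 4.1}--\ref{lem increase} then identify the fresh small-ball factor with $\varphi(\eta)$ up to a perturbation of the threshold of order $(t/u)^\tau u^{H+\theta}$, which by Lemma \ref{cor4.1} costs a factor $\exp(c\,(t/u)^\tau\eta^{-1-1/\beta})$; the analogous perturbation on the $[0,t]$ side, again handled by Lemma \ref{cor4.1}, contributes $\exp(c\,(t/u)^\tau\nu^{-1-1/\beta})$ alongside the factor $\varphi(\nu)$. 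This produces the first term of \eqref{4.10}. The additive remainder $c_{3,3}\exp(-c_{3,4}(u/t)^\tau)$ is the cost of the above reduction: one must first remove the ``bad'' event on which the remote, $\mathcal{F}_v$-measurable part of $Y$ over the late window deviates by more than the $(t/u)^\tau u^{H+\theta}$ tolerance; this is a centred Gaussian event whose variance, by the geometric placement of $v$ and the constraints on $\tau$, is bounded by a positive power of $t/u$ times $u^{2(H+\theta)}$, so Gaussian concentration (Borell's inequality) bounds its probability by $c_{3,3}\exp(-c_{3,4}(u/t)^\tau)$.

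The main obstacle is the long-range dependence carried by the moving-average representation: the contribution to $Y$ over the late window of the noise occurring before the intermediate time $v$ does not become negligible — it still acts at the full scale $u^{H+\theta}$ — so $\{M(t)\le\nu t^{H+\theta}\}$ and $\{M(u)\le\eta u^{H+\theta}\}$ cannot be decoupled exactly, but only after removing the Gaussian large-deviation event above. More delicately, recovering the \emph{sharp} small-ball function $\varphi$ in the main term (rather than a crude $\exp(-c\eta^{-1/\beta})$) forces a careful joint use of the strong local nondeterminism bound and of Lemmas \ref{pro 4.1}--\ref{lem increase} when passing between the fresh process and a genuine copy of $Y$ at scale $u$. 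This is exactly the delicate point treated in \cite[\S4]{WX2022b}; the only new feature here is that the relevant kernel is $x^{-\gamma/2}(s-x)^{\alpha+\theta}$ and the relevant index is $\beta=\alpha+\theta+1/2$ with self-similarity exponent $H+\theta$, all of which is already available from the proof of Proposition \ref{lem-MtMu}.
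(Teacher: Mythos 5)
Your overall architecture (intermediate scale between $t$ and $u$, decoupling, a multiplicative correction via Lemma \ref{cor4.1}, plus an additive Gaussian remainder) matches the spirit of the paper, but the specific decomposition you choose breaks down at the decisive step. You split $Y$ at the intermediate time $v$ by conditioning on the filtration $\mathcal{F}_v=\sigma(B(r):r\le v)$, and your additive remainder is supposed to come from the event that the remote, $\mathcal{F}_v$-measurable part $\mathbb{E}[Y(s)\mid\mathcal{F}_v]$ exceeds the tolerance $(t/u)^{\tau}u^{H+\theta}$ on the late window, with the claim that its variance is bounded by a positive power of $t/u$ times $u^{2(H+\theta)}$. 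That variance bound is false: for $s$ of order $u$ the conditional mean contains the full contribution of the noise on $(-\infty,0]$ (and on $[0,v]$), and already $\int_{-\infty}^{0}\bigl((s-x)^{\alpha}-(-x)^{\alpha}\bigr)^{2}|x|^{-\gamma}\,dx \asymp s^{2H}$ by scaling, so $\mathbb{E}[Y(s)\mid\mathcal{F}_v]$ has standard deviation of full order $u^{H+\theta}$ on the late window, not reduced by any power of $t/u$. Hence the probability of your ``bad'' event tends to $1$ as $u/t\to\infty$ rather than being $\le c_{3,3}\exp(-c_{3,4}(u/t)^{\tau})$ — you in fact acknowledge this full-scale long-range influence in your ``main obstacle'' paragraph, but it contradicts the variance bound on which the remainder rests. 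Relatedly, after the conditional Anderson step the factor you must control is $\mathbb{P}\bigl(\sup|\widehat{Y}_v|\le\eta u^{H+\theta}\bigr)$, and the strong local nondeterminism chaining only yields a crude bound $\exp(-c\,\eta^{-1/\beta})$ with an unspecified constant; upgrading it to the exact factor $\varphi(\eta)$ demanded by \eqref{4.10} would require adding the conditional mean back via a triangle inequality, which again needs exactly the smallness that fails. Deferring this to ``the delicate point treated in WX2022b'' does not close the gap, because that reference does not argue this way.

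The paper's proof avoids the filtration split altogether: it truncates the noise domain of the moving-average representation \eqref{eq X} at $|x|=v=\sqrt{ut}$, writing $X=X_1+X_2$ with $X_1$ driven by noise on $|x|\le v$ and $X_2$ by noise on $|x|>v$, and sets $Y_i=R_\theta X_i$; these are genuinely independent, so no Anderson step is needed. The point of this symmetric split is that \emph{both} cross terms are uniformly small on the relevant windows after rescaling: $Y_2$ on $[0,t]$ sees only noise at distance $\ge v/t=\sqrt{u/t}\to\infty$, and $Y_1$ on $[0,u]$ sees only noise at distance $\le v/u=\sqrt{t/u}\to 0$. This permits the triangle-inequality comparisons $\mathbb{P}(\sup_{[0,t]}|Y_1|\le(\nu+\delta)t^{H+\theta})\le\varphi(\nu+2\delta)+J_2$ and its analogue at scale $u$, with $\delta=(t/u)^{\tau}$, after which Lemma \ref{cor4.1} produces the multiplicative factor in \eqref{4.10}; the remainders $J_2,J_3$ are then estimated by increment bounds for the truncated processes combined with the fractional-integral lemmas \ref{5.4} and \ref{5.5}, and it is there that the constraints $\tau\le(1-H)/3$, $\tau\le H/6$, $\tau\le(1-\gamma)/12$ enter. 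To repair your argument you would essentially have to replace the $\mathcal{F}_v$-conditioning by this noise-domain truncation (or otherwise prove smallness of the remote influence on the late window, which the long-range dependence prevents).
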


 Recall the following two lemmas about the moment estimates and the maximal inequality for the fractional integrated Gaussian processes.

\begin{lemma}{\rm\cite[Lemma 5.4]{El03}}\label{5.4}
Let  $\{X(t)\}_{0 \le  t \le  1}$  be a separable, centered, real-valued Gaussian process with  $X(0)=0$. Assume that there exists some  positive constants $\eta$ and $c_X$  such that
$$
\left(\mathbb{E}(X(t+h)-X(t))^{2}\right)^{1 / 2} \le  c_{X} h^{\eta}, \quad  h\in (0,1).
$$
Let
\begin{equation}\label{fra}
Y(s)=\frac{1}{\Gamma(\theta)} \int_{0}^{s}(s-w)^{\theta-1} X(w) \mathrm{d} w, \quad 0 \le  s \le  1,\, \theta>0.
\end{equation}
Then,  we have
$$
\left(\mathbb{E}(Y(s+h)-Y(s))^{2}\right)^{1 / 2} \le  c_{3,5}c_{X}\max \left(h^{\theta / 2}, h^{1 / 2}\right),
$$
where the constant $c_{3,5}$     depends  only on  $\eta$  and  $\theta$.
\end{lemma}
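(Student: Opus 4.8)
The plan is to estimate the $L^2(\Omega)$ norm of the increment directly from the Riemann--Liouville representation of $Y$, reducing the probabilistic estimate to two elementary deterministic integral inequalities. First I would split the increment at $s$, writing $\Gamma(\theta)(Y(s+h)-Y(s))=A+B$ with
\begin{align*}
A&=\int_0^s\big((s+h-w)^{\theta-1}-(s-w)^{\theta-1}\big)X(w)\,dw,\\
B&=\int_s^{s+h}(s+h-w)^{\theta-1}X(w)\,dw,
\end{align*}
so that by the triangle inequality in $L^2(\Omega)$ it suffices to bound $\|A\|_2$ and $\|B\|_2$ separately (here $\|\cdot\|_2$ denotes the $L^2(\Omega)$ norm).

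The key device is the generalized Minkowski (integral) inequality, which moves the norm inside the $dw$-integral, e.g. $\|A\|_2\le\int_0^s|(s+h-w)^{\theta-1}-(s-w)^{\theta-1}|\,\|X(w)\|_2\,dw$; equivalently one may expand $\mathbb{E}[A^2]$ as a double integral of the covariance $\mathbb{E}[X(w_1)X(w_2)]$ and apply Cauchy--Schwarz. The hypotheses $X(0)=0$ and the assumed modulus then give $\|X(w)\|_2=(\mathbb{E}(X(w)-X(0))^2)^{1/2}\le c_X w^\eta\le c_X$ for $w\in[0,1]$, so the whole problem collapses to the two kernel integrals $\int_0^s|(s+h-w)^{\theta-1}-(s-w)^{\theta-1}|\,dw$ and $\int_s^{s+h}(s+h-w)^{\theta-1}\,dw$, each carrying the harmless prefactor $c_X/\Gamma(\theta)$.

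The $B$-integral is immediate: the substitution $r=s+h-w$ turns it into $\int_0^h r^{\theta-1}\,dr=h^\theta/\theta$, which lies below $\theta^{-1}\max(h^{\theta/2},h^{1/2})$ since $h\le 1$. The $A$-integral is the crux. After the substitution $r=s-w$ it equals $\theta^{-1}|(s+h)^\theta-s^\theta-h^\theta|$, the absolute value reflecting that the integrand $(r+h)^{\theta-1}-r^{\theta-1}$ has a constant sign that flips with $\theta$ (positive for $\theta>1$, negative for $\theta<1$, and identically zero for $\theta=1$). I would then bound $|(s+h)^\theta-s^\theta-h^\theta|$ uniformly in $s\in[0,1-h]$ by a short case split: for $\theta<1$, subadditivity $(s+h)^\theta\le s^\theta+h^\theta$ and monotonicity $s^\theta\le(s+h)^\theta$ yield $0\le s^\theta+h^\theta-(s+h)^\theta\le h^\theta$; for $\theta\ge1$, the mean value theorem together with $s+h\le1$ gives $(s+h)^\theta-s^\theta\le\theta(s+h)^{\theta-1}h\le\theta h$, hence $|(s+h)^\theta-s^\theta-h^\theta|\le\theta h$.

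Assembling the two bounds finishes the proof: for $0<\theta<1$ both integrals are at most $h^\theta/\theta\le h^{\theta/2}/\theta$ and $\max(h^{\theta/2},h^{1/2})=h^{\theta/2}$; for $\theta\ge1$ the $A$-integral is at most $h$ and the $B$-integral at most $h/\theta$, while $\max(h^{\theta/2},h^{1/2})=h^{1/2}\ge h$. In every case $\|Y(s+h)-Y(s)\|_2\le c_{3,5}c_X\max(h^{\theta/2},h^{1/2})$ with $c_{3,5}$ depending only on $\theta$ (hence, a fortiori, on $\eta$ and $\theta$). The main obstacle is exactly the uniform-in-$s$ control of the $A$-integral: the sign of the kernel difference changes with $\theta$, and one must ensure the gain $h^\theta$ is not destroyed by an $s$-dependent blow-up---the constraint $s+h\le1$ is precisely what keeps the $\theta\ge1$ estimate bounded.
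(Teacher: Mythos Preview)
The paper does not give its own proof of this lemma; it is quoted from El-Nouty \cite[Lemma~5.4]{El03} and used as a black box in the proof of Proposition~\ref{Prop-MtMu1}. Your argument is a correct, self-contained proof: the split at $w=s$, Minkowski's integral inequality, the uniform bound $\|X(w)\|_2\le c_X w^{\eta}\le c_X$, and the elementary kernel estimates $\int_0^s|(r+h)^{\theta-1}-r^{\theta-1}|\,dr=\theta^{-1}|(s+h)^\theta-s^\theta-h^\theta|$ all go through exactly as written. Your bound is in fact a little sharper than what is stated---you obtain $h^\theta$ for $\theta<1$ and $h$ for $\theta\ge1$, rather than $h^{\theta/2}$ and $h^{1/2}$---and your constant $c_{3,5}$ depends only on $\theta$, not on $\eta$; this is consistent with the lemma since the stated dependence is merely an upper bound on what the constant may involve.
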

\begin{lemma}{\rm\cite[Lemma 5.5]{El03}}\label{5.5}
Let  $\{Y(s)\}_{0 \le  s \le  1}$  be the fractional integral defined in $\eqref{fra}$. If there exist some positive constants $\eta$ and $c_Y$ such that
\begin{align}
\left(\mathbb{E}(Y(s+h)-Y(s))^{2}\right)^{1 / 2}\le  c_{Y} h^{\eta}, \quad  h\in (0,1),
\end{align}
then for any $\delta>c_{Y}$,
$$
\mathbb{P} \left(\sup _{0 \le  s \le  1}|Y(s)| \ge  \delta\right) \le  \frac{1}{c_{3,6}} \exp \left(-c_{3,6}\left(c_{Y}^{-1} \delta\right)^{2}\right),
$$
where  $c_{3,6}$ is a positive constant independent of  $c_{Y}$  and  $\delta$.
\end{lemma}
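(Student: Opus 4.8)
The plan is to treat $\{Y(s)\}_{0\le s\le 1}$ as a centered Gaussian process and to combine a chaining estimate for the mean of its supremum with the Borell--TIS concentration inequality. Since $Y$ is a pathwise Lebesgue integral of the Gaussian process $X$, it is centered Gaussian with $Y(0)=0$; moreover, the hypothesis $\left(\mathbb{E}(Y(s+h)-Y(s))^2\right)^{1/2}\le c_Y h^\eta$ together with Gaussianity and the Kolmogorov--Chentsov criterion (take $p>1/\eta$) forces $Y$ to admit an a.s.\ continuous, hence separable, modification. Write $d(s,t):=\left(\mathbb{E}(Y(s)-Y(t))^2\right)^{1/2}$ for the canonical metric, so that the hypothesis reads $d(s,t)\le c_Y|s-t|^\eta$ on $[0,1]$. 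Then the $d$-diameter of $[0,1]$ is at most $c_Y$, and since $Y(0)=0$ the variance proxy satisfies $\sigma^2:=\sup_{0\le s\le1}\operatorname{Var}(Y(s))=\sup_s d(s,0)^2\le c_Y^2$.

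First I would bound the metric entropy of $([0,1],d)$. As $d(s,t)\le\varepsilon$ is implied by $|s-t|\le(\varepsilon/c_Y)^{1/\eta}$, every Euclidean interval of length $(\varepsilon/c_Y)^{1/\eta}$ has $d$-diameter at most $\varepsilon$, so for $0<\varepsilon<c_Y$ one covers $[0,1]$ with $N(\varepsilon)\le 1+(c_Y/\varepsilon)^{1/\eta}$ such intervals, while $N(\varepsilon)=1$ for $\varepsilon\ge c_Y$. Hence $\log N(\varepsilon)\le C_\eta\left(1+\log(c_Y/\varepsilon)\right)$ on $(0,c_Y)$. Feeding this into Dudley's entropy bound, anchored at $Y(0)=0$, and substituting $\varepsilon=c_Y u$ gives
\[
\mathbb{E}\Big[\sup_{0\le s\le 1}Y(s)\Big]\le C\int_0^{c_Y}\sqrt{\log N(\varepsilon)}\,d\varepsilon\le C\,c_Y\int_0^1\sqrt{1+\tfrac1\eta\log\tfrac1u}\,du=:C'_\eta\,c_Y.
\]
The crucial point is that this integral is finite and depends only on $\eta$, so the entire $c_Y$-dependence sits in the prefactor. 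Using $Y\overset{d}{=}-Y$ and $\sup_s|Y(s)|\le\sup_s Y(s)+\sup_s(-Y(s))$ then yields $m:=\mathbb{E}\big[\sup_{0\le s\le1}|Y(s)|\big]\le 2C'_\eta c_Y=:C''_\eta c_Y$.

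Next I would apply the Borell--TIS inequality to the a.s.\ continuous centered Gaussian supremum $\sup_{0\le s\le1}|Y(s)|$: for every $x>0$,
\[
\mathbb{P}\Big(\sup_{0\le s\le1}|Y(s)|\ge m+x\Big)\le\exp\!\left(-\frac{x^2}{2\sigma^2}\right)\le\exp\!\left(-\frac{x^2}{2c_Y^2}\right).
\]
A short case split then delivers the stated form for $\delta>c_Y$. If $\delta\le 2m$, then $(\delta/c_Y)^2\le(2C''_\eta)^2=:A_\eta$, so choosing $c_{3,6}$ small enough that $\exp(-c_{3,6}A_\eta)\ge c_{3,6}$ makes $\tfrac{1}{c_{3,6}}\exp(-c_{3,6}(\delta/c_Y)^2)\ge 1$ and the inequality is automatic. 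If $\delta>2m$, set $x:=\delta-m>\delta/2$, so the display above gives $\mathbb{P}(\sup_{0\le s\le1}|Y(s)|\ge\delta)\le\exp(-\delta^2/(8c_Y^2))$; imposing in addition $c_{3,6}\le 1/8$ guarantees $\tfrac{1}{c_{3,6}}\exp(-c_{3,6}(\delta/c_Y)^2)\ge\exp(-\delta^2/(8c_Y^2))$. Taking $c_{3,6}$ to satisfy both constraints finishes the proof.

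The main obstacle is the bookkeeping that forces $c_{3,6}$ to depend on $\eta$ alone, and not on $c_Y$ or $\delta$. This is exactly what the linear scaling of $d$ in $c_Y$ buys us: both $m$ and $\sigma$ are proportional to $c_Y$, so $\delta$ enters only through the scale-invariant ratio $\delta/c_Y$, and the single genuinely analytic input—finiteness of $\int_0^1\sqrt{\log(1/u)}\,du$—produces a constant depending on $\eta$ only.
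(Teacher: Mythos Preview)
The paper does not supply a proof of this lemma: it is quoted verbatim from El-Nouty \cite[Lemma~5.5]{El03}, so there is no in-paper argument to compare against. Your proposal is correct and self-contained: the process $Y$ is a linear (Bochner/Lebesgue) integral of the centered Gaussian process $X$, hence centered Gaussian with $Y(0)=0$; Dudley's entropy integral applied to the canonical metric $d(s,t)\le c_Y|s-t|^\eta$ gives $\mathbb{E}\sup_{s\in[0,1]}|Y(s)|\le C_\eta c_Y$, and the Borell--TIS inequality with $\sigma\le c_Y$ then yields the sub-Gaussian tail in the ratio $\delta/c_Y$; your case split absorbs the additive mean into the constant. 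The only dependence of $c_{3,6}$ is on $\eta$, exactly as the lemma allows. This is the standard route to such maximal inequalities for H\"older-type Gaussian processes and is in the same spirit as the argument in \cite{El03}; the hypothesis $\delta>c_Y$ is not actually needed in your proof (it merely guarantees the bound is below $1$ for a suitably chosen $c_{3,6}$), so you have established a slightly stronger statement.
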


\begin{proof}[Proof of Proposition \ref{Prop-MtMu1}]
It is sufficient to prove \eqref{4.10} when $u/t$ is large enough, since  \eqref{4.10} always true for some constants   when $u/t$ is bounded.
We set $v=\sqrt{ut}$. The idea is that if $t\ll u$, then $t\ll v\ll u$.
We recall the GFBM $X$ in \eqref{eq X} and define
$$G(s, x):=\left((s-x)_{+}^{\alpha}-(-x)_{+}^{\alpha}\right)|x|^{-\gamma/2} , \quad\text{for}\,  s,\, x \in \mathbb{R}. $$
Then
\begin{equation*}
\begin{split}
X(s) &=\int_{|x| \leq v} G(s, x) B(d x)+\int_{|x|> v} G(s, x) B(d x) \\
&=:X_{1}(s)+X_{2}(s).
\end{split}
\end{equation*}
The processes  $X_1$  and  $X_2$  are independent.
Let  $$Y_{1}(s):=\frac{1}{\Gamma(\theta)}\int_{0}^{s}(s-w)^{\theta-1} X_{1}(w) d w,$$
$$ Y_{2}(s):=\frac{1}{\Gamma(\theta)}\int_{0}^{s}(s-w)^{\theta-1} X_{2}(w) d w.$$
Then  $Y =Y_{1} +Y_{2}$, and the processes  $Y_1$  and $ Y_{2} $  are also independent.

For any  $\delta>0$, we have
\begin{equation}\label{5}
\begin{split}
& \mathbb P\left(M(t) \le  \nu t^{H+\theta}, M(u)\leq \eta u^{H+\theta}\right) \\
= &\, \mathbb P\left(\sup _{0 \leq s \leq t}|Y(s)| \le  \nu t^{H+\theta}, \sup _{0 \leq s \leq u}|Y(s)| \le  \eta u^{H+\theta}\right) \\
\leq &\, \mathbb P\left(\sup _{0 \leq s \leq t}\left|Y_{1}(s)\right| \leq(\nu+\delta) t^{H+\theta}, \sup _{0 \leq s \leq u}\left|Y_{2}(s)\right| \leq(\eta+\delta) u^{H+\theta}\right) \\
 &\, +\mathbb P\left(\sup _{0 \leq s \leq t}\left|Y_{2}(s)\right| \ge  \delta t^{H+\theta}\right)+\mathbb P\left(\sup _{0 \leq s \leq u}|Y_{1}(s)| \ge  \delta u^{H+\theta}\right)\\
=:&\, J_{1}+J_{2}+J_{3}.
\end{split}
\end{equation}
By the independence of  $Y_{1} $  and  $Y_{2} $, we have
\begin{equation}\label{3}
J_{1}=\mathbb P\left(\sup _{0 \leq s \leq t}\left|Y_{1}(s)\right| \leq(\nu+\delta) t^{H+\theta}\right) \cdot \mathbb P\left(\sup _{0 \leq s \leq u}\left|Y_{2}(s)\right| \leq(\eta+\delta) u^{H+\theta}\right).
\end{equation}
Notice that
\begin{equation}
\mathbb P\left(\sup _{0 \leq s \leq t} |Y_{1}(s)| \le (\nu+\delta) t^{H+\theta}\right) \le  \mathbb P\left(\sup _{0 \leq s \leq t} |Y(s)| \le (\nu+2 \delta)t^{H+\theta}\right)+J_{2},
\end{equation}
and
\begin{equation}\label{4}
\mathbb P\left(\sup _{0 \leq s \leq u}\left|Y_{2}(s)\right| \le (\eta+\delta) u^{H+\theta}\right) \leq \mathbb P\left(\sup _{0 \leq s \leq u}|Y(s)| \le (\eta+2 \delta) u^{H+\theta}\right)+J_{3}.
\end{equation}
Plugging \eqref{3} to \eqref{4} into \eqref{5}, we obtain
\begin{equation}\label{new}
\mathbb P\left(M(t) \le  \nu t^{H+\theta}, M(u)\leq \eta u^{H+\theta}\right) \leq \varphi (\nu+2 \delta) \varphi (\eta+2 \delta)+3 J_{2}+3 J_{3}.
\end{equation}
For any $\nu, \eta \in (0, 2/K_2)$,
by Lemma \ref{cor4.1}, we have
\begin{equation}\label{(1)}
\varphi (\nu+2 \delta)\varphi (\eta+2 \delta)\leq \varphi (\nu)\varphi (\eta)\exp \left(2 \delta K_{3}\left(\frac{1}{\nu^{1+1/\beta}}+\frac{1}{\eta^{1+1/\beta}}\right) \right).
\end{equation}
Next, it remains  to obtain the  upper bounds  of $J_2$ and $J_3$.

By a change of variable, we have
 \begin{equation*}
\begin{split}
J_{2}&=\, \mathbb{P}\left(\sup _{0 \leq s \leq t}\left|Y_{2}(s)\right|\geq\delta t^{H+\theta}\right)\\
&=\mathbb{P}\left(\sup _{0 \leq s \leq t}\left| \frac{1}{\Gamma(\theta)}\int_{0}^{s}(s-y)^{\theta-1}d y  \int_{|x|>v}B(d x)\left((y-x)_{+}^{\alpha}-(-x)_{+}^{\alpha}\right)| x|^{-\frac{\gamma}{2}}  \right| \geq \delta t^{H+\theta}\right)\\
&=\mathbb{P}\left(\sup _{0 \leq s \leq 1}\left|\frac{1}{\Gamma(\theta)}\int_{0}^{s}(s-y)^{\theta-1} d y \int_{|x|>\frac{v}{t}}B(d x)\left((y-x)_{+}^{\alpha}-(-x)_{+}^{\alpha}\right)| x|^{-\frac{\gamma}{2}}  \right| \geq \delta\right).
\end{split}
\end{equation*}
Denote
  \begin{equation*}
\begin{split}
 \overline{X}_{2}(y):=&\, \int_{|x|>\frac{v}{t}} \left((y-x)_{+}^{\alpha}-(-x)_{+}^{\alpha}\right)|x|^{-\frac{\gamma}{2}} B(d x),\\
\overline{Y}_{2}(s):= &\, \frac{1}{\Gamma(\theta)}\int_{0}^{s}(s-y)^{\theta-1} \overline{X}_{2}(y) d y.
\end{split}
\end{equation*}
By using the following  elementary inequality,
\begin{equation}\label{e1}
 \left|(x+y)^{\alpha}-x^{\alpha}\right| \le |\alpha| x^{\alpha-1} y,  \quad \text{for any $ x, y>0$ and $\alpha \le 1$,}
\end{equation}
 we have that for any  $0\leq s<s+w\leq1$,
\begin{equation*}
\begin{split}
\mathbb{E}\left[\left(\overline{X}_{2}(s+w)-\overline{X}_{2}(s)\right)^{2}\right]&=
\int_{-\infty}^{-v/t}\left((s+w-x)^{\alpha}-(s-x)^{\alpha}\right)^{2}(-x)^{-\gamma}dx\\
&\leq \int_{-\infty}^{-v/t}\alpha^{2} w^{2}(-x)^{2\alpha-2-\gamma}dx\\
&=\frac{\alpha^{2}}{2-2H} \left(\frac{v}{t}\right)^{2 H-2} w^{2}.
\end{split}
\end{equation*}
By Lemma \ref{5.4}, there exists a positive constant $C_1>0$ such that
\begin{equation}\label{c1}
\mathbb{E}\left[\left(\overline{Y}_{2}(s+w)-\overline{Y}_{2}(s)\right)^{2}\right] \leq C^{2}_{1}\left(\frac{v}{t}\right)^{2 H-2}\max\left\{w^{\theta},w\right\}.
\end{equation}
Applying  Lemma  \ref{5.5} with  $\eta=\min\left\{\frac{\theta}{2}, \frac12\right\}>0$, $c_{Y}=C_1\left(\frac{v}{t}\right)^{H-1}$, we have that for  any $\delta>c_{Y}$,
\begin{equation}\label{J2}
J_{2}=\mathbb{P}\left(\sup _{0 \leq s \leq 1}\left|\overline{Y}_{2}(s)\right|\geq\delta\right) \leq \frac{1}{c_{3,6}} \exp \left(-\frac{c_{3,6}}{C^{2}_{1}\left(v^{2}/t^{2}\right)^{H-1}} \delta^{2}\right).
 \end{equation}
 Let $\delta=\left(t/u\right)^{\tau}$. Taking  $u/t \geqslant C_1^{6/(1-H)}$  and  $\tau \leq (1- H)/3$ such that $\delta>c_{Y}$,  we have that by \eqref{J2},
\begin{equation}\label{(2)}
J_{2}\leq \frac{1}{c_{3,6}} \exp \left(-\frac{c_{3,6}}{C^{2}_{1}}\left(\frac{u}{t}\right)^{1-H-2\tau} \right)\leq \frac{1}{c_{3,6}}\exp \left(-\frac{ c_{3,6}}{C^{2}_{1}}\left(\frac{u}{t}\right)^{\tau} \right).
 \end{equation}

For the third term,  by  a change of variable, we have
\begin{align*}
 J_{3}=&\,\mathbb{P}\left(\sup _{0 \leq s \leq u}\left|Y_{1}(s)\right|\geq\delta u^{H+\theta}\right)= \mathbb{P}\left(\sup _{0 \leq s \leq 1}\left|\overline{Y}_{1}(s)\right|\geq\delta\right),
 \end{align*}
 where $$
\overline{Y}_{1}(s)=\frac{1}{\Gamma(\theta)}\int_{0}^{s}(s-y)^{\theta-1} \overline{X}_{1}(y) d y,$$
with
 $$\overline{X}_{1}(y)=\int_{|x|\le \frac{v}{u}} \left((y-x)_{+}^{\alpha}-(-x)_{+}^{\alpha}\right)|x|^{-\frac{\gamma}{2}} B(d x).$$
For the upper bound  of $J_3$, we have that for any $0\le  s<s+w\le  1$,
\begin{align*}
& \overline{X}_{1}(s+w)-\overline{X}_{1}(s)\\
=&\, \int_{-v/u}^{(s+w)\wedge (v/u)}(s+w-x)^{\alpha}|x|^{-\frac{\gamma}{2}} B(dx)-\int_{-v/u}^{s\wedge (v/u)}(s-x)^{\alpha}|x|^{-\frac{\gamma}{2}} B(dx).
\end{align*}
Consequently,
\begin{equation}\label{eq X1 diff}
\begin{split}
\mathbb{E}\left[\left(\overline{X}_{1}(s+w)-\overline{X}_{1}(s)\right)^{2}\right]
=&\int_{s\wedge (v/u)}^{(s+w)\wedge (v/u)}(s+w-x)^{2\alpha}|x|^{-\gamma}dx\\
&+\int_{-v/u}^{s\wedge (v/u)}((s+w-x)^{\alpha}-(s-x)^{\alpha})^{2}|x|^{-\gamma}dx\\
=:&I_{1}+I_{2}.
\end{split}
\end{equation}
 When  $s\ge \frac{v}{u} $, $I_{1}=0.$  When  $s<\frac{v}{u}$, taking
\begin{equation}
    \label{def r}
    r := \max\left\{\frac{1}{2}, \frac{\alpha}{2H}+ \frac{1}{2}\right\} \in \left(\frac{\alpha}{H},1\right),
\end{equation}
we have that $rH>0$ and $\alpha-rH<0$. Consequently,
 \begin{equation}\label{eq X1 I1}
\begin{split}
I_{1}&=\int_{s}^{(s+w) \wedge (v / u)}(s+w-x)^{2rH+2\alpha-2rH} x^{-\gamma} d x \\
&\leqslant w^{2 r H} \int_{s}^{(s+w) \wedge (v / u)}(s+w-x)^{2 \alpha-2 r H} x^{-\gamma} d x \\
&\leq w^{2 r H} \int_{0}^{(s+w) \wedge(v / u)}((s+w) \wedge( v / u)-x)^{2 \alpha-2 r H} x^{-\gamma} d x \\
&\leqslant \mathcal{B}(1-\gamma, 2 \alpha-2 r H+1) w^{2 r H} \left(\frac{v}{u}\right)^{2 H-2 r H}, \\
\end{split}
\end{equation}
where $\mathcal{B}(p,q)=\int_{0}^{1}x^{p-1}(1-x)^{q-1}dx,  (p>0, q>0)$, denotes the Beta function.

  By using the following  elementary inequality
   \begin{equation}\label{e2}
 \left|(x+y)^{\alpha}-x^{\alpha}\right| \leq y^{\alpha}, \quad \text{for any } x, y\ge0, \, \text{and} \, \alpha \le 1,
 \end{equation}
 we have that for any $s, w\ge0$ and $x\le s$,
\begin{equation*}
\begin{split}
\left|(s+w-x)^{\alpha}-(s-x)^{\alpha}\right|&=\left|\alpha \int_{s}^{s+w}(y-x)^{\alpha-1} d y\right|\\
&=\left|\alpha \int_{s}^{s+w}(y-x)^{r H-1} (y-x)^{\alpha-r H} d y\right|\\
&\leqslant\left|\frac{\alpha}{ rH}(s-x)^{\alpha-rH}\left((s+w-x)^{r H}-(s-x)^{r H}\right)\right|\\
& \leqslant \frac{| \alpha|}{r H}(s-x)^{\alpha-r H} w^{r H}.
\end{split}
\end{equation*}
 It follows that
\begin{equation*}
\begin{split}
I_{2}=&\, \int_{-v/u}^{0}((s+w-x)^{\alpha}-(s-x)^{\alpha})^{2}|x|^{-\gamma}dx\\
&+\int_{0}^{s\wedge (v/u)}((s+w-x)^{\alpha}-(s-x)^{\alpha})^{2}|x|^{-\gamma}dx\\
 \leq &\,  \left(\frac{\alpha}{ r H}\right)^{2} w^{2 r H}\left[\int_{0}^{s \wedge (v / u)}(s-x)^{2 \alpha-2 rH} x^{-\gamma} d x+\int_{0}^{s \wedge (v / u)}(s+x)^{2 \alpha-2 rH} x^{-\gamma} d x\right].
\end{split}
\end{equation*}
Since $\alpha-rH<0$ and $H-rH>0$, we get
 \begin{equation}\label{eq X1 I2}
\begin{split}
I_{2}
&\leq\left(\frac{\alpha}{ r H}\right)^{2} w^{2 rH}\left[\int_{0}^{s \wedge (v / u)}(s \wedge (v / u)-x)^{2 \alpha-2 r H} x^{-\gamma} d x+\int_{0}^{v / u} x^{2 \alpha-2 r H-\gamma} d x\right] \\
&\leqslant \left(\frac{\alpha}{ r H}\right)^{2}w^{2 r H}\left[\mathcal{B}(1-\gamma, 2 \alpha-2r H +1)\left(\frac{v}{u}\right)^{2 H-2 r H}+\frac{1}{2 H-2 r H}\left(\frac{v}{u}\right)^{2 H-2 r H}\right]. \\
\end{split}
\end{equation}
Putting \eqref{eq X1 diff}, \eqref{eq X1 I1} and \eqref{eq X1 I2} together, there exists a positive constant $c_{3,7}$ such that
$$\mathbb{E}\left(\overline{X}_{1}(s+w)-\overline{X}_{1}(s)\right)^{2}\leqslant c_{3,7} \left(\frac{v}{u}\right)^{2 H-2 r H} w^{2 r H}.$$
By Lemma \ref{5.4}, there exists a positive constant $C_2>0$ such that
\begin{equation}\label{c2}
\mathbb E\left(\overline{Y}_{1}(s+w)-\overline{Y}_{1}(s)\right)^{2}  \le  C_2^{2}\left(\frac{v}{u}\right)^{2 H- 2rH}\max\left\{w^{\theta},w\right\}.
\end{equation}
 Consequently, Applying Lemma \ref{5.5} with $\eta=\min\{\theta/2 ,1/2\}>0$, $c_{Y}=C_2 \left(v/u\right)^{H-rH}$, we have that for any $\delta>c_{Y}$,
\begin{equation}\label{J3}
J_{3}=\mathbb P\left(\sup _{0 \leq s \leq 1}\left|\overline{Y}_{1}(s)\right|\geq\delta\right) \le  \frac{1}{c_{3,6}} \exp \left(-\frac{c_{3,6}}{C_2^{2}\left(v^{2}/u^{2}\right)^{ H-rH}} \delta^{2}\right).
\end{equation}
 Taking $u/t \geqslant C_2^{6/(H-rH)}$ and $\tau \le  \frac{H-rH}{3}$ such that $\delta = \left(t/u\right)^{\tau} >c_{Y}$. Recalling the definition of $r$ by \eqref{def r}, it is sufficient to take
 $u/t \geqslant \max \{C_2^{12/H}, C_2^{24/(1-\gamma)} \}$ and $\tau \le  \min\{ \frac{H}{6}, \frac{1-\gamma}{12} \}$. Consequently,  we have that by \eqref{J3},
\begin{equation}\label{(3)}
J_{3}\le  \frac{1}{c_{3,6}} \exp \left(-\frac{c_{3,6}}{C_2^{2}}\left(\frac{u}{t}\right)^{H-rH-2\tau}\right) \le  \frac{1}{c_{3,6}} \exp \left(-\frac{c_{3,6}}{C_2^{2}}\left(\frac{u}{t}\right)^{\tau}\right).
\end{equation}
Putting  \eqref{new}, \eqref{(1)}, \eqref{(2)} and \eqref{(3)} together,  we get \eqref{4.10}.

In conclusion, we have proved that \eqref{4.10} holds when $u/t \geqslant \max\left\{C_1^{6/(1-H)},C_2^{12/H}, C_2^{24/(1-\gamma)}\right\}$ with constants $C_1$ and $C_2$ defined in \eqref{c1} and \eqref{c2} respectively.
Since \eqref{4.10} is always true for some  constants $c_{3,2}, c_{3,3}, c_{3,4}$    uniformly over $1\leqslant u/t \leqslant \max\left\{C_1^{6/(1-H)},C_2^{12/H}, C_2^{24/(1-\gamma)}\right\}$, the proof is complete.
\end{proof}

\section{Proof of Theorem \ref{Thm-ChungLIL}}
\label{Proof of Thm-ChungLIL}

To prove Theorem \ref{Thm-ChungLIL}, we apply  some zero-one laws to show that  the lower limits in \eqref{3.1} and \eqref{3.2} are  constants almost surely (possibly $0$ or $\infty$).  Then we use  Theorems  \ref{Thm-LowerClassesZero} and \ref{Thm-LowerClassesInf} to prove  that the constants are positive and finite.
  Following Takashima \cite{Tak89}, we say that
an $(H+\theta)$-self-similar process $Y=\{Y(t)\}_{t\ge0}$ is ergodic (resp. strong mixing) if for every $a > 0$ and $a \neq 1$, the
scaling transformation $S_{a}(Y)=\{a^{-(H+\theta)}Y(at)\}_{t\ge0}$ is ergodic (resp. strong mixing). This is equivalent to saying that all
the shift transformations for the corresponding stationary process obtained via Lamperti's transformation
$L(Y) =\{e^{-(H+\theta)t}Y(e^{t}
)\}_{t\in \mathbb{R}}$ are ergodic (resp. strong mixing).

We recall a zero-one law for ergodic self-similar processes, which complements the results of Takashima \cite{Tak89}.
\begin{proposition}{\rm\cite[Proposition 3.3]{TX07}}
\label{TX07-Prop3.3}
    Let $X = \{ X(t)\}_{t\in \mathbb{R}}$ be a separable, self-similar process with index $\kappa$. We assume that $X(0) = 0$ and that $X$ is ergodic. Then, for any increasing function $\psi: \mathbb{R}_+  \rightarrow \mathbb{R}_+$, we have $\mathbb{P}(E_{\kappa, \psi}) = 0$ or $1$, where
    \begin{equation*}
        E_{\kappa, \psi}:= \left\{ \omega: \, \text{there exists $\delta > 0$ such that $\sup_{0 \le s \le t} |X(s)| \ge t^{\kappa} \psi(t)$ for all $0 < t \le \delta$}\right\}.
    \end{equation*}
\end{proposition}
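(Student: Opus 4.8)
The plan is to transfer the statement to the stationary process obtained from $X$ by Lamperti's transformation and then exploit ergodicity. Put $Z(r):=e^{-\kappa r}X(e^{r})$ for $r\in\mathbb{R}$; since $X$ is $\kappa$-self-similar with $X(0)=0$, the process $Z$ is stationary, and the hypothesis that $X$ is ergodic means that the Lamperti flow $\theta_c\colon (Z(r))_{r}\mapsto (Z(r+c))_{r}$ is ergodic (equivalently, by Takashima's criterion recalled above, each $\theta_c$ with $c\neq 0$ is an ergodic measure-preserving transformation). The first step is the elementary identity
\[
\sup_{0\le s\le t}|X(s)|=t^{\kappa}\sup_{u\le 0}e^{\kappa u}\,|Z(\log t+u)|,\qquad t>0,
\]
obtained via the substitution $s=e^{r}$ together with $X(0)=0$; separability of $X$ makes this supremum, hence all the events below, measurable. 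Writing $t=e^{\tau}$ and $\widetilde\psi(\tau):=\psi(e^{\tau})$ (still increasing), the defining condition of $E_{\kappa,\psi}$ at scale $t=e^{\tau}$ reads $\sup_{u\le 0}e^{\kappa u}|Z(\tau+u)|\ge\widetilde\psi(\tau)$. Hence, with $G_{T}:=\{\,\forall\,\tau\le T:\ \sup_{u\le 0}e^{\kappa u}|Z(\tau+u)|\ge\widetilde\psi(\tau)\,\}\in\sigma(Z(r):r\le T)$, which decreases as $T$ increases, we get $E_{\kappa,\psi}=\bigcup_{T\in\mathbb{R}}G_{T}=\bigcup_{n\ge 1}G_{-n}$.

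Next I would compute the effect of a shift. Applying $\theta_c$ corresponds to replacing $X$ by the rescaled self-similar process $s\mapsto e^{-\kappa c}X(e^{c}s)$; carrying this through the identity above and relabelling the time variable by $t\mapsto e^{c}t$ gives
\[
\theta_c^{-1}E_{\kappa,\psi}=\Big\{\exists\,\delta>0:\ \sup_{0\le s\le t}|X(s)|\ge t^{\kappa}\,\psi(e^{-c}t)\ \text{for all }0<t\le\delta\Big\}.
\]
Because $\psi$ is increasing, for $c>0$ one has $\psi(e^{-c}t)\le\psi(t)$ for all $t>0$, whence $E_{\kappa,\psi}\subseteq\theta_c^{-1}E_{\kappa,\psi}$. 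Since $\theta_c$ preserves $\mathbb{P}$, the two events have equal probability, so they differ only by a $\mathbb{P}$-null set, i.e. $\mathbf{1}_{E_{\kappa,\psi}}\circ\theta_c=\mathbf{1}_{E_{\kappa,\psi}}$ a.s. for every $c>0$. A $\{0,1\}$-valued random variable that is a.s. invariant under an ergodic transformation is a.s. constant; since $\theta_c$ is ergodic for each $c\neq 0$ (or, knowing only that the flow is ergodic, after first upgrading the above a.s. relations to a.s. invariance of $E_{\kappa,\psi}$ under the whole flow by a routine Fubini argument using joint measurability of $(c,\omega)\mapsto\theta_c\omega$), this forces $\mathbb{P}(E_{\kappa,\psi})\in\{0,1\}$.

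The one genuine difficulty---and the step I expect to require the most care---is that $E_{\kappa,\psi}$ is \emph{not} literally shift-invariant: a time-scaling distorts the barrier $\psi$ by a dilation of its argument, and it is precisely the monotonicity of $\psi$, together with the measure-preserving property of $\theta_c$, that lets one absorb this distortion and promote the one-sided inclusion $E_{\kappa,\psi}\subseteq\theta_c^{-1}E_{\kappa,\psi}$ to an a.s. equality. The remaining points---measurability of $\sup_{0\le s\le t}|X(s)|$ and of the relevant countable unions and intersections via separability of $X$, and stationarity and ergodicity of $Z$---are routine and I would only indicate them briefly.
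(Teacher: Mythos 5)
The paper does not prove this proposition at all---it is quoted directly from \cite[Proposition 3.3]{TX07}---and your argument is correct and is essentially the proof given there: pass to the stationary process via Lamperti's transformation, use the monotonicity of $\psi$ together with measure preservation to show that $E_{\kappa,\psi}$ is almost invariant under each scaling/shift transformation, and conclude by ergodicity. The only remark worth adding is that, under the paper's definition of ergodicity (every scaling transformation $S_a$, $a\neq 1$, equivalently every Lamperti shift $\theta_c$, $c\neq 0$, is ergodic), a single ergodic shift $\theta_c$ with $c>0$ already suffices, so the Fubini upgrade to flow-invariance that you mention as a fallback is not needed.
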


We then use Proposition \ref{TX07-Prop3.3} to prove a weaker form of Theorem \ref{Thm-ChungLIL}.

\begin{lemma}\label{le3}
Assume  $\gamma \in[0,1)$, $\alpha \in(-1 / 2+\gamma/2, 1 / 2)$ and $\theta>0$.
\begin{itemize}
\item[(a)]  There exists a  constant  $\kappa_{1}^{\prime}\in[0, \infty]$  such that
\begin{equation}\label{3.6}
\liminf _{t \rightarrow 0+} \sup _{0 \le  s \le  t} \frac{|Y(s)|}{t^{H+\theta} /\left(\ln \ln t^{-1}\right)^{\beta }}=\kappa_{1}^{\prime} \quad \text { a.s. }
\end{equation}\label{3.7a}

\item[(b)]  There exists a  constant  $\kappa_{2}^{\prime}\in[0, \infty]$  such that
\begin{equation}\label{3.7b}
\liminf _{t \rightarrow \infty} \sup _{0 \le  s \le  t} \frac{|Y(s)|}{t^{H+\theta} /(\ln \ln t)^{\beta}}=\kappa_{2}^{\prime} \quad \text { a.s. }
\end{equation}
\end{itemize}
\end{lemma}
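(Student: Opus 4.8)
The plan is to apply the zero–one law of Proposition \ref{TX07-Prop3.3} to the Lamperti-type object associated with $Y$ in order to see that each of the two \emph{liminf}'s is a deterministic constant in $[0,\infty]$. The key observation is that $Y$ is $(H+\theta)$-self-similar: since $X$ is $H$-self-similar, a change of variables in \eqref{y} gives $Y(at)\overset{d}{=}a^{H+\theta}Y(t)$ as processes, so the scaling transformation $S_a(Y)=\{a^{-(H+\theta)}Y(at)\}_{t\ge0}$ leaves the law of $Y$ invariant. First I would record this self-similarity carefully (it follows from the integral representation of $Y_2$ computed in Section \ref{Proof of Thm-SmallBall}, or directly from \eqref{y} by substituting $u\mapsto au$ and $t\mapsto at$). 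Next I would argue ergodicity: by Lamperti's transformation $L(Y)=\{e^{-(H+\theta)r}Y(e^{r})\}_{r\in\mathbb R}$ is a stationary Gaussian process, and stationary Gaussian processes with a spectral measure having no atoms (equivalently, with covariance tending to $0$ at infinity) are mixing, hence ergodic; one checks the decay of the covariance of $L(Y)$ from that of $Y$, which in turn comes from the representation of $Y$ as an integral of $X$. This is essentially the same verification carried out for $X$ itself in \cite{WX2022b}, so I would cite that and indicate that the extra Riemann–Liouville smoothing only improves the decay.

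With ergodicity of $Y$ in hand, part (a) is immediate: set $\kappa=H+\theta$ and, for each fixed $c>0$, apply Proposition \ref{TX07-Prop3.3} with the increasing function $\psi_c(t)=c/(\ln\ln t^{-1})^{\beta}$ (defined for small $t$, extended arbitrarily but increasingly elsewhere). The event $E_{\kappa,\psi_c}$ is exactly $\{\,\liminf_{t\to0+}\sup_{0\le s\le t}|Y(s)|\,t^{-(H+\theta)}(\ln\ln t^{-1})^{\beta}\ge c\,\}$, so $\mathbb P$ of it is $0$ or $1$; letting $c$ range over the rationals, the liminf in \eqref{3.6} equals the supremum of those $c$ for which the probability is $1$, a constant $\kappa_1'\in[0,\infty]$. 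This proves (a).

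For part (b) the liminf is taken as $t\to\infty$, so Proposition \ref{TX07-Prop3.3}, which is stated at the origin, does not apply verbatim. I would handle this by the standard time-inversion trick for self-similar processes: the process $\widetilde Y(t):=t^{H+\theta}Y(1/t)$ (for $t>0$, with $\widetilde Y(0)=0$) is again $(H+\theta)$-self-similar, is ergodic whenever $Y$ is (the Lamperti stationary processes of $Y$ and $\widetilde Y$ are time-reversals of each other, hence one is ergodic iff the other is), and $\sup_{0\le s\le t}|Y(s)|$ at large $t$ corresponds to $\sup_{s\ge 1/t}|\widetilde Y(s)\,s^{-(H+\theta)}|$-type quantities at small $1/t$; after rewriting, \eqref{3.7b} becomes an event of the form $E_{\kappa,\psi}$ for $\widetilde Y$ and the same zero–one argument gives a constant $\kappa_2'\in[0,\infty]$. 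The main obstacle — and the only place requiring genuine work rather than bookkeeping — is the ergodicity verification: one must confirm that the covariance function of $L(Y)$ genuinely tends to $0$ (so that the spectral measure is non-atomic), which requires an explicit asymptotic estimate of $\mathbb E[Y(s)Y(t)]$ for $t/s\to\infty$ derived from the $X$-integral; everything else is a routine translation of the definitions into the hypotheses of Proposition \ref{TX07-Prop3.3}.
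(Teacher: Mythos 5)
Your part (a) and your ergodicity plan coincide with the paper's route: the paper establishes exponential decay of $e^{-(H+\theta)t}\,\mathbb{E}[Y(e^{t})Y(1)]$ by an explicit estimate of $\mathbb{E}[X(u)X(v)]$ (split into the terms $J_1$, $J_{2,1}$, $J_{2,2}$ with a carefully chosen interpolation exponent $\delta$), then invokes Maruyama's theorem to conclude that $L(Y)$ is strong mixing, hence ergodic, and finally applies Proposition \ref{TX07-Prop3.3} with $\psi_c(t)=c(\ln\ln 1/t)^{-\beta}$ and $\kappa_1'=\sup\{c\ge 0:\mathbb{P}(E_{\kappa,\psi_c})=1\}$. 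Note that this covariance estimate cannot simply be quoted from \cite{WX2022b}: the covariance of $Y$ is a double integral of that of $X$ against the Riemann--Liouville kernel, and the decay rate $\kappa=\min\{\alpha+1/2-\gamma/2,\,1/2-\gamma/2-\delta(1-\alpha)\}$ has to be extracted by hand, which is exactly what Section \ref{Proof of Thm-ChungLIL} does; your proposal only gestures at this, though you do acknowledge it is the substantive step.

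The genuine gap is in part (b). First, the inversion $\widetilde Y(t)=t^{H+\theta}Y(1/t)$ is not $(H+\theta)$-self-similar: for a $\kappa$-self-similar process the inversion preserving the index is $t^{2\kappa}Y(1/t)$ (for Brownian motion, $tB(1/t)=t^{2H}B(1/t)$ with $H=1/2$), whereas with your exponent one gets $\{\widetilde Y(at)\}_t\overset{d}{=}\{\widetilde Y(t)\}_t$, i.e.\ index $0$, and the Lamperti process $e^{-(H+\theta)r}\widetilde Y(e^{r})$ is then not stationary. Second, and more fundamentally, even with the corrected exponent the event in \eqref{3.7b} does not become an event of the form $E_{\kappa,\psi}$ for $\widetilde Y$: since $Y(s)=s^{2\kappa}\widetilde Y(1/s)$, one has $\sup_{0\le s\le t}|Y(s)|=\sup_{r\ge 1/t}r^{-2\kappa}|\widetilde Y(r)|$, a \emph{weighted} supremum over the unbounded set $[1/t,\infty)$, not $\sup_{0\le r\le 1/t}|\widetilde Y(r)|$, so Proposition \ref{TX07-Prop3.3} cannot be applied to $\widetilde Y$ as you claim; your "after rewriting" step is precisely where the argument breaks. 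The paper does not invert time at all: it observes that the proof of \cite[Proposition~3.3]{TX07}, which rests on ergodicity of the scaling transformations $S_a$, applies with minor modifications to the analogous events at infinity (``$\sup_{0\le s\le t}|Y(s)|\ge t^{\kappa}\psi(t)$ for all $t\ge T$''), yielding the zero--one law needed for \eqref{3.7b}. To repair your part (b) you should either rerun the Tudor--Xiao argument directly at infinity, as the paper does, or supply a correct transformation of the event, which the current bookkeeping does not provide.
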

\begin{proof}
The proof  is inspired by  Tudor and Xiao \cite{TX07}. First, we show that the auto-covariance function of $L(Y)$ satisfies
\begin{align}\label{3.13}
e^{-(H+\theta) t} \mathbb{E}\left[Y(e^{t}) Y(1)\right]=O\left(e^{-\kappa t}\right) \text { as } t \rightarrow +\infty,
\end{align}
where $\kappa>0$ is a constant.

Since
\begin{equation}
\begin{split}\label{ly}
&e^{-(H+\theta) t} \mathbb{E}\left[Y(e^{t}) Y(1)\right]\\
=&\, e^{-(H+\theta) t} \mathbb{E}\left(\frac{1}{\Gamma^{2}(\theta)} \int_{0}^{e^{t}} \left(e^{t}-v\right)^{\theta-1} X(v)d v \cdot \int_{0}^{1}(1-u)^{\theta-1} X(u)d u \right) \\
=&\, \frac{1}{\Gamma^{2}(\theta)} e^{-(H+\theta) t} \int_{0}^{e^{t}} \left(e^{t}-v\right)^{\theta-1}d v\int_{0}^{1}d u(1-u)^{\theta-1} \mathbb{E}\left[X(u) X(v)\right]. \\
\end{split}
\end{equation}
For  $0\le  u\le  1$ and $0\le  v\le  e^{t}$,
\begin{equation*}
\begin{split}
\mathbb{E}\left[X(u) X(v)\right]=&\, \int_{0}^{u \wedge v}(u-x)^{\alpha}(v-x)^{\alpha} x^{-\gamma} d x\\
&+\int_{0}^{+\infty}\left((u+x)^{\alpha}-x^{\alpha}\right)\left((v+x)^{\alpha}-x^{\alpha}\right) x^{-\gamma} d x \\
=:&\, J_{1}+J_{2}.
\end{split}
\end{equation*}

For the integral  $J_{1}$, when  $\alpha \in\left[0, 1/2\right)$, for  $0\le  u\le  1$ and $0\le  v\le  e^{t}$,
\begin{equation*}
J_{1} \le  u^{\alpha} v^{\alpha} \int_{0}^{u \wedge v} x^{-\gamma} d x=\frac{1}{1-\gamma}u^{\alpha} v^{\alpha}(u \wedge v)^{1-\gamma}\le  \frac{1}{1-\gamma}u^{\alpha} v^{\alpha}.
\end{equation*}
When  $\alpha \in\left(-1/2+\gamma/2, 0\right)$,
\begin{equation*}
\begin{split}
J_{1}\le &\,  \int_{0}^{u \wedge v}((u \wedge v)-x)^{2 \alpha} x^{-\gamma} d x\\
=&\,  (u \wedge v)^{2 H}\mathcal{B}(1-\gamma, 2 \alpha+1)\le  \mathcal{B}(1-\gamma, 2 \alpha+1).
\end{split}\end{equation*}
Therefore
\begin{equation*}
J_1\leqslant \max\left\{ \frac{1}{1-\gamma}u^{\alpha} v^{\alpha}, \mathcal{B}(1-\gamma, 2 \alpha+1)\right\}.
\end{equation*}

To deal with the integral  $J_{2}$, we divide the integral interval as follows:
\begin{equation*}
    \begin{split}
J_2&= \left[\int_{0}^{1}+\int_{1}^{+\infty}\right]\left((u+x)^{\alpha}-x^{\alpha}\right)\left((v+x)^{\alpha}-x^{\alpha}\right) x^{-\gamma} d x \\
&=:J_{2,1}+J_{2,2}.
    \end{split}
\end{equation*}
By \eqref{e2},
we have
\begin{equation*}
J_{2,1} \leq \int_{0}^{1} u^{\alpha} v^{\alpha} x^{-\gamma} d x = \frac{1}{1-\gamma}u^{\alpha} v^{\alpha}.
\end{equation*}
For any
$\delta \in\left((\alpha-\gamma)_{+}/(1-\alpha),\left(1/2-\gamma/2\right) /(1-\alpha)\right)$, by \eqref{e1},  we have
\begin{equation*}
J_{2,2} \leq |\alpha| u \int_{1}^{+\infty}\left((v+x)^{\alpha}-x^{\alpha}\right)x^{\alpha-1-\gamma} d x.
\end{equation*}
Then by using inequalities \eqref{e1} and  \eqref{e2} for $\left((v+x)^{\alpha}-x^{\alpha}\right)^{\delta}$ and $\left((v+x)^{\alpha}-x^{\alpha}\right)^{(1-\delta)}$ respectively, we obtain
\begin{equation*}
\begin{split}
J_{2,2} \leqslant
&\,  |\alpha|u \int_{1}^{+\infty}\left(|\alpha| v x^{\alpha-1}\right)^{\delta}\left(v^{\alpha}\right)^{1-\delta} x^{\alpha-1-\gamma} d x \\
 = &\, \frac{|\alpha|^{1+\delta}}{(1-\alpha) \delta+\gamma-\alpha} u v^{(1-\alpha) \delta+\alpha}.
\end{split}
\end{equation*}
Therefore
\begin{equation*}
J_2 \le  \frac{1}{1-\gamma}u^{\alpha} v^{\alpha}+\frac{|\alpha|^{1+\delta}}{(1-\alpha) \delta+\gamma-\alpha} u v^{(1-\alpha) \delta+\alpha}.
\end{equation*}

It follows that
\begin{equation}\label{e}
\begin{split}
\mathbb{E}\left[X(u)X(v)\right] &\le \frac{2}{1-\gamma}  u^{\alpha}v^{\alpha}+\frac{| \alpha|^{1+\delta}}{(1-\alpha) \delta+\gamma-\alpha}  u v^{(1-\alpha) \delta+\alpha}+\mathcal{B}(1-\gamma, 2 \alpha+1)\\
&\leqslant c_{4,1}\left(u^{\alpha} v^{\alpha}+u v^{(1-\alpha) \delta+\alpha}+1\right),
\end{split}
\end{equation}
where $c_{4,1}$ is a positive constant.

Plugging \eqref{e} into \eqref{ly}, we obtain
\begin{equation*}
\begin{split}
e^{-(H+\theta) t} \mathbb{E}\left[Y(e^{t}) Y(1)\right]\le & \,
\frac{c_{4,1}e^{-(H+\theta) t}}{\Gamma^{2}(\theta)}\int_{0}^{e^{t}} \left(e^{t}-v\right)^{\theta-1}v^{\alpha}d v\int_{0}^{1} (1-u)^{\theta-1}u^{\alpha}d u\\
\,&+\frac{c_{4,1}e^{-(H+\theta) t}}{\Gamma^{2}(\theta)}\int_{0}^{e^{t}}\left(e^{t}-v\right)^{\theta-1}v^{(1-\alpha) \delta+\alpha} d v\int_{0}^{1}(1-u)^{\theta-1}u d u\\
\,&+\frac{c_{4,1}e^{-(H+\theta) t}}{\Gamma^{2}(\theta)}
\int_{0}^{e^{t}}\left(e^{t}-v\right)^{\theta-1}d v \int_{0}^{1}(1-u)^{\theta-1}d u \\
\le &\, c_{4,2}
\left( e^{-(H-\alpha) t}+ e^{-\left[H-\alpha-(1-\alpha) \delta\right] t}+e^{-Ht} \right) \\
= &\, O\left(e^{-\kappa t}\right) \text { as } t \rightarrow +\infty,
\end{split}
\end{equation*}
where  $\kappa=\min \left\{\alpha+1/2-\gamma/2, 1/2-\gamma/2-\delta(1-\alpha)\right\}>0$.
By the Fourier inversion formula, $L(Y)$ has a continuous spectral density function. It
follows from the Theorem 8 in \cite{Maruyama1949THEHA} that $L(Y)$ is strong mixing and thus is ergodic.

Hence, applying Proposition \ref{TX07-Prop3.3} with  $\psi_{c}(t):=c(\ln \ln 1 / t)^{-\beta}$  and    $$\kappa^{\prime}_{1}:=\sup \left\{c \geq 0: \mathbb{P}\left(E_{\kappa, \psi_{c}}\right)=1\right\}, $$ we obtain  \eqref{3.6}.

The proof of \eqref{3.7b} is similar to that of \cite[Proposition 3.3]{TX07} with minor modifications.
 The proof is complete.
\end{proof}

\begin{proof}[Proof of Theorem \ref{Thm-ChungLIL}]
For any   $\lambda>0$, let
\begin{equation*}
\xi_{\lambda}(t):=\frac{\lambda t^{H+\theta}}{(\ln |\ln t|)^{\beta}}, \quad t>0.
\end{equation*}
By Theorem \ref{Thm-SmallBall} and Theorem \ref{Thm-LowerClassesZero} (resp. Theorem \ref{Thm-LowerClassesInf}), we derive that if  $\lambda<\kappa_{3}^{\beta}$,  $\xi_{\lambda} \in L L C_{0}(M)$ (resp. $\xi_{\lambda} \in L L C_{\infty}(M)$), else if  $\lambda> \kappa_{4}^{\beta}$, $\xi_{\lambda} \in   L U C_{0}(M)$(resp. $\xi_{\lambda} \in L U C_{\infty}(M)$). Then applying Lemma \ref{le3}, we obtain the desirable results.
\end{proof}

      \vskip0.5cm

\medskip

\bigskip

\end{document}